\newtheorem{proposition}{Proposition}
\newtheorem{theorem}{Theorem}
\newtheorem{lemma}{Lemma}
\newtheorem{corollary}{Corollary}
\theoremstyle{definition}
\theoremstyle{remark}
\newtheorem*{remark*}{Remark}
\newtheorem*{question*}{Question}
\newcommand \Conf {{\mathrm {Conf}}}
\DeclareMathOperator{\supp}{supp}
\DeclareMathOperator{\tr}{tr}
\newcommand{\HH}{\mathscr {H}}
\newcommand{\dH}{{\dot H_{1/2}}}
\newcommand{\Ha}{{\mathfrak{H}}}
\newcommand{\Hb}{{\mathbf{H}}}
\newcommand{\SINE}{\mathscr{S}}
\renewcommand \Prob {{\mathbb P}}
\begin{document}
\title[The expectation of a multiplicative functional]{The expectation of a multiplicative functional under the sine-process}

\author[A. I. Bufetov]{Alexander I. Bufetov}

\address{Steklov Mathematical Institute of the Russian Academy of Sciences,\newline\hspace*{\parindent}Moscow, Russia\newline
	\hspace*{\parindent}The Kharkevich Institute for Information Transmission Problems,\newline\hspace*{\parindent}Moscow, Russia\newline
	\hspace*{\parindent}Aix-Marseille Universit\'e, CNRS, Institut de Mathématiques de Marseille,\newline\hspace*{\parindent}Marseille, France}

\email{bufetov@mi-ras.ru}

\date{}

\begin{abstract}
	An explicit expression for the expected value of a regularized multiplicative functional under the sine-process is obtained by passing to the scaling limit in the Borodin---Okounkov---Geronimo---Case formula.\\[2pt]
	MSC: 60B20\\
	Keywords: sine-process, multiplicative functional, Wiener---Hopf operator, Borodin---Okounkov---Geronimo---Case formula
\end{abstract}

\maketitle

\bigskip

{\raggedright\relax\leftskip=0.35\textwidth\relax\textit{\foreignlanguage{russian}{Светлой памяти моих дорогих учителей:\\ Бориса Марковича Гуревича (1938--2023) и\\ Анатолия Моисеевича Вершика (1933--2024)}}\par}

\bigskip

{\raggedright\relax\leftskip=0.35\textwidth\relax\textit{In loving memory of my teachers,\\ Boris Markovich Gurevich (1938--2023) and\\ Anatoly Moiseyevich Vershik (1933--2024)}\par}

\section{Introduction}

\subsection{Statement of the main result}

The sine-process is the scaling limit of radial parts of Haar measures on the unitary groups of growing dimension. Expectations of multiplicative functionals with respect to the sine-process admit an explicit expression for observables of Sobolev regularity $1/2$ and having bounded Hilbert transform. The explicit expression is obtained as the scaling limit of the Borodin---Okounkov---Geronimo---Case formula for Toeplitz determinants. Recall that the Borodin---Okounkov---Geronimo---Case formula gives the remainder term in the Strong Szeg\H{o} Theorem in the form of Ibragimov.

Recall that the sine-process, denoted by $\Prob_{\SINE}$, is a determinantal point process with the sine kernel
\begin{equation*}
	\SINE(x,y)=\frac{\sin\pi(x-y)}{\pi(x-y)},
\end{equation*}
which is the kernel of the projection operator acting in $L_2(\mathbb{R})$ and whose range is the Paley---Wiener space
\begin{equation*}
	\mathscr{PW}=\{f\in L_2(\mathbb{R}): \supp \widehat f\subset [-\pi,\pi])\}. 	
\end{equation*} 
In other words, the sine-process is a measure on the space of configurations $\Conf(\mathbb{R})$, that is, the space of subsets $X\subset\mathbb{R}$ without accumulation points. The sine-process is uniquely defined by the condition
\begin{equation}\label{eq:def-sin-proc}
\mathbb{E}_{\Prob_{\SINE}}\prod_{x\in X}(1+f(x))=\det(1+f\SINE)
\end{equation}
valid for any bounded Borel function $f$ with compact support. Theorem~\ref{thm:lim-BOGC} below gives a convenient expression for the expectation~\eqref{eq:def-sin-proc} for $1/2$-Sobolev regular functions $f$ with bounded Hilbert transform.

To any Borel bounded function $f$ with compact support assign an additive functional $S_f$ on $\Conf(\mathbb{R})$ by the formula
\begin{equation*}
	S_f(X)=\sum_{x\in X}f(x);
\end{equation*}
the series in the right-hand side contains only finite number of non-zero terms. The variance of the additive functional $S_f$ is given by the formula
\begin{equation*}
	\operatorname{Var}_{\Prob_{\SINE}}S_f=\frac{1}{2}\iint_{\mathbb{R}^2}|f(x)-f(y)|^2\cdot |\Pi(x,y)|^2\,dxdy.
\end{equation*}
Following \cite{Bufetov-Quasi}, we define the space $\dH(\SINE)$ as the completion of the family of compactly supported smooth functions on $\mathbb{R}$ with respect to the norm $\|\,\cdot\,\|_{\dH(\SINE)}$ given by the formula
\begin{equation*}
	\|f\|_{\dH(\SINE)}^2=
	\iint_{\mathbb{R}^2}|f(x)-f(y)|^2\cdot |\Pi(x,y)|^2\,dxdy.
\end{equation*}
By definition, the correspondence $f\mapsto S_f-\mathbb{E}_{\Prob_{\SINE}}S_f$ is extended by continuity onto the entire space $\dH(\SINE)$. Let us denote
\begin{equation*}
	\overline{S}_f=S_f-\mathbb{E}_{\Prob_{\SINE}}S_f
\end{equation*}
and refer to $\overline{S}_f$ as to the regularized additive functional for the function $f\in\dH(\SINE)$.

Below we give an explicit formula for the exponential moments of regularized additive functionals with $1/2$-Sobolev regular functions $f$ for the sine-process. Let us recall some basic definitions.

We use the following convention for the Fourier transform on the real line:
\begin{equation*}
	\widehat{f}(s)=\int_{\mathbb{R}}e^{-i\lambda s} f(\lambda)\,d\lambda,\quad
	f(\lambda)=\frac{1}{2\pi}\int_{\mathbb{R}}e^{i\lambda s}\widehat{f}(s)\,ds
\end{equation*}
Denote by the symbol $\widetilde{\phantom{a}}$ the reflection with respect to zero:
\begin{equation*}
	\widetilde{f}(\lambda)=\frac{1}{2\pi}\int_{\mathbb{R}} \widehat{f}(-u)e^{iu\lambda}\,du=f(-\lambda).
\end{equation*}
Define the space of the Sobolev type $\dH(\mathbb{R})$ as the completion of the family of smooth compactly supported functions with respect to the norm
\begin{equation*}
	\|f\|_{\dH(\mathbb{R})}^2=\int_{\mathbb{R}}|u|\cdot |\widehat{f}(u)|^2\,du=
	\iint_{\mathbb{R}^2}\biggl|\frac{f(\xi)-f(\eta)}{\xi-\eta}\biggr|^2\,d\xi d\eta.
\end{equation*}
By the symbol $\langle{\,\cdot\,},{\,\cdot\,}\rangle_{\dH(\mathbb{R})}$ we denote the bilinear form given by the formula
\begin{equation*}
	\langle f_1,f_2\rangle_{\dH(\mathbb{R})}= \int_{\mathbb{R}}|u|\cdot \widehat{f_1}(u)\widehat{f_2}(-u)\,du.
\end{equation*}
Therefore,
\begin{equation*}
	\|f\|_{\dH(\mathbb{R})}^2=
	\langle f,\overline{f}\rangle_{\dH(\mathbb{R})}.
\end{equation*}
Further, for a function $f\in\dH(\mathbb{R})$ let $f_+$, $f_-$ be the functions defined by the formulae
\begin{equation*}
	\widehat{f_+}=\widehat f \cdot\chi_{(0,\infty)},\quad
	\widehat{f_-}=\widehat f \cdot\chi_{(-\infty,0)}.
\end{equation*}
Finally, let
\begin{equation}\label{eq:h}
	h=e^{f_- - f_+}.
\end{equation}
Clearly, $f_--f_+$ is the Hilbert transform of the function $f$ multiplied by $\sqrt{-1}$.

For a function $r\in L_2(\mathbb{R})\cap L_\infty(\mathbb{R})$ denote by $\Ha(r)$ the continual Hankel operator, acting by the formula
\begin{equation*}
	\Ha(r)\varphi(s)=\frac{1}{2\pi}\int_0^{+\infty} \widehat{r}(s+t)\varphi(t)\,dt. 
\end{equation*}

Denote by the symbol $\HH(1/2,\infty)$ the completion of the space of smooth compactly supported functions on $\mathbb{R}$ with respect to the norm
\begin{equation*}
	\|f\|_{\HH(1/2,\infty)}=\|f\|_{L_\infty(\mathbb{R})}+\|f\|_{\dH(\mathbb{R})}.
\end{equation*}
One can see from the definition that for $h\in\HH(1/2,\infty)$ the operator $\Ha(h)$ is Hilbert---Schmidt.

Further, $f_1,f_2\in\HH(1/2,\infty)$ implies $f_1f_2\in\HH(1/2,\infty)$. Therefore, $\exp(f)-1\in\HH(1/2,\infty)$ holds if $f\in\HH(1/2,\infty)$.

Now the expectation of a multiplicative functional of the sine-process, corresponding to a $1/2$-Sobolev regular function, is given as follows.

\begin{theorem}\label{thm:lim-BOGC}
	Let $f\in \dH(\mathbb{R})$ satisfy $f_--f_+\in L_\infty(\mathbb{R})$. Then
	\begin{multline}	\label{eq:thm-lim-BOGC}
		\mathbb{E}_{\Prob_{\SINE}}\exp(\overline{S}_f)=		\exp\Bigl(\frac{1}{4\pi^2}\langle f_+,\widetilde{f_-} \rangle_{\dH(\mathbb{R})}\Bigr)\cdot{}\\
		{}\cdot
		\det\biggl(1-\chi_{(1,+\infty)}\Ha\biggl(h\biggl(\frac{\cdot}{2\pi}\biggr)\biggr)\Ha\biggl(\widetilde{ h^{-1}}\biggl(\frac{\cdot}{2\pi}\biggr)\biggr)\chi_{(1,+\infty)}\biggr).
	\end{multline}
\end{theorem}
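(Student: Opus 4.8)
\emph{Proof proposal.} The plan is to obtain \eqref{eq:thm-lim-BOGC} as the scaling limit, as $n\to\infty$, of the Borodin--Okounkov--Geronimo--Case (BOGC) formula applied to the Toeplitz determinants produced by the circular unitary ensemble $\mathrm{CUE}(n)$, whose eigenvalue configurations, rescaled by the factor $n/(2\pi)$, converge to $\Prob_{\SINE}$. First, I would reduce to $f\in C_c^\infty(\mathbb R)$ by approximation: both sides of \eqref{eq:thm-lim-BOGC} are continuous in $f$ --- the left-hand side in the natural topology on $\{f\in\dH(\mathbb R):f_--f_+\in L_\infty(\mathbb R)\}$, via the variance formula for $\overline S_f$ together with the standard exponential estimates for additive functionals of $\Prob_{\SINE}$; the right-hand side because the exponential prefactor is continuous in $\|\cdot\|_{\dH(\mathbb R)}$ and, since $h^{\pm1}-1\in\HH(1/2,\infty)$, the operators $\Ha(h(\tfrac{\cdot}{2\pi}))$ and $\Ha(\widetilde{h^{-1}}(\tfrac{\cdot}{2\pi}))$ are Hilbert--Schmidt and depend continuously on $f$, whence so does the Fredholm determinant. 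For $f\in C_c^\infty(\mathbb R)$ both hypotheses of the theorem hold automatically.

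Next, fixing $n$ larger than $\diam\supp f$, I would set $\psi_n(e^{i\theta})=\exp\bigl(f(n\theta/(2\pi))\bigr)$ for $\theta\in(-\pi,\pi]$; this is a smooth non-vanishing function of winding number $0$, so the BOGC formula applies. By the Heine--Szeg\H{o} identity $\mathbb E_{\mathrm{CUE}(n)}\prod_j\psi_n(e^{i\theta_j})=D_n(\psi_n)$, and one has the factorisation $D_n(\psi_n)=G(\psi_n)^n\,E(\psi_n)\,\det(1-K_n)$, where $G(\psi_n)^n=\exp\bigl(n(\log\psi_n)_0\bigr)=\exp\bigl(\int_{-n/2}^{n/2}f\bigr)$, the factor $E(\psi_n)=\exp\bigl(\sum_{k\ge1}k(\log\psi_n)_k(\log\psi_n)_{-k}\bigr)$ is the strong Szeg\H{o} constant, and $K_n$ is the BOGC operator --- the product of the discrete Hankel operators with symbols $\psi_{n,-}/\psi_{n,+}$ and $\widetilde{\psi_{n,+}/\psi_{n,-}}$, compressed to the indices $\ge n$. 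Since the angular density of $\mathrm{CUE}(n)$ equals $n/(2\pi)$, one has $\mathbb E_{\mathrm{CUE}(n)}S_f=\int_{-n/2}^{n/2}f\to\mathbb E_{\Prob_{\SINE}}S_f$, so the factor $G(\psi_n)^n$ cancels exactly the regularising constant in $\exp(\overline S_f)$ and
\begin{equation*}
\mathbb E_{\Prob_{\SINE}}\exp(\overline S_f)=\lim_{n\to\infty}E(\psi_n)\,\det(1-K_n);
\end{equation*}
the passage to the limit on the left is legitimate because, with $g=e^f-1$, the discrete sine kernel of $\mathrm{CUE}(n)$ converges to $\SINE$ in trace norm on $\supp g$, so that $\mathbb E_{\mathrm{CUE}(n)}\prod_j(1+g(n\theta_j/(2\pi)))\to\mathbb E_{\Prob_{\SINE}}\prod_{x\in X}(1+g(x))$.

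It then remains to evaluate the two limits. For $n>\diam\supp f$ one has exactly $(\log\psi_n)_k=\tfrac1n\widehat f(2\pi k/n)$, so $\log E(\psi_n)=\tfrac1{n^2}\sum_{k\ge1}k\,\widehat f(2\pi k/n)\,\widehat f(-2\pi k/n)$ is a Riemann sum of mesh $2\pi/n$ which, since $f\in\dH(\mathbb R)$, converges to $\tfrac1{4\pi^2}\langle f_+,\widetilde{f_-}\rangle_{\dH(\mathbb R)}$, the exponent of the prefactor in \eqref{eq:thm-lim-BOGC}. In the same way the Fourier coefficients of $\psi_{n,\pm}$ are Riemann sums of $\widehat{f_\pm}$, so $\psi_{n,-}/\psi_{n,+}$ and $\widetilde{\psi_{n,+}/\psi_{n,-}}$ are discrete approximations of $h$ and $\widetilde{h^{-1}}$; the cutoff onto the indices $\{m\ge n\}$, after rescaling the index by $1/n$, becomes the projection $\chi_{(1,+\infty)}$, and the rescaling of the circle by $n/(2\pi)$ produces the argument $\tfrac{\cdot}{2\pi}$ inside the Hankel symbols.

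The hard part will be making this last convergence precise: one must show that, under the rescaling $m\mapsto m/n$ of the index, the two discrete Hankel operators converge to $\Ha(h(\tfrac{\cdot}{2\pi}))$ and $\Ha(\widetilde{h^{-1}}(\tfrac{\cdot}{2\pi}))$ in Hilbert--Schmidt norm, with norm bounds uniform in $n$. Granting this, $\det(1-K_n)$ converges to the Fredholm determinant on the right of \eqref{eq:thm-lim-BOGC} by continuity of the determinant on the Hilbert--Schmidt class, and the theorem follows. The difficulty is that one must pass from the Riemann-sum approximations of $f_\pm$ to their exponentials $h^{\pm1}$ --- a composition of two approximations that has to be controlled in a norm strong enough to dominate Hankel operators --- and then compare discrete with continual Hankel operators, which is the Hankel-operator counterpart of the classical convergence of the discrete sine kernel to $\SINE$. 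It is precisely here that the hypotheses $f\in\dH(\mathbb R)$ and $f_--f_+\in L_\infty(\mathbb R)$ enter: they force $h,h^{-1}\in\HH(1/2,\infty)$, hence make $\Ha(h(\tfrac{\cdot}{2\pi}))$ and $\Ha(\widetilde{h^{-1}}(\tfrac{\cdot}{2\pi}))$ Hilbert--Schmidt, so that the limiting determinant is well defined and the uniform estimates can be carried through.
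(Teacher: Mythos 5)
Your proposal is correct and follows essentially the same route as the paper: reduce to smooth compactly supported $f$ by continuity of both sides in $\HH(1/2,\infty)$, identify $\lim_{n\to\infty}D_n(\exp(R_n))$ with $\det\bigl(1+(e^f-1)\SINE\bigr)$ through the convergence of the rescaled Dirichlet (CUE) kernel to the sine kernel, and pass to the scaling limit in the Borodin---Okounkov---Geronimo---Case formula, the Szeg\H{o} constant becoming a Riemann sum converging to $\frac{1}{4\pi^2}\langle f_+,\widetilde{f_-}\rangle_{\dH(\mathbb{R})}$. The one step you defer --- convergence of the compressed products of discrete Hankel operators to their continual counterparts --- is exactly the content of the paper's Lemma, established there not via Hilbert---Schmidt convergence of the operators but via termwise convergence of the traces $\tr\bigwedge^{l}$ of exterior powers together with the uniform summable bound $\bigl|\tr\bigwedge^{l}\Hb(R_n)\bigr|\le(\|\widehat{R_n}^{\mathbb{T}}\|_{L_1})^{l}/l!$, which justifies interchanging the limit with the Fredholm determinant.
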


The condition $f_--f_+\in L_\infty(\mathbb{R})$ immediately implies $h=\exp(f_--f_+)\in\HH(1/2,\infty)$.

The boundedness condition may be omitted if $f$ is real-valued.
\begin{corollary}
	The formula \eqref{eq:thm-lim-BOGC} holds if $f\in\dH(\mathbb{R})$ is real-valued.
\end{corollary}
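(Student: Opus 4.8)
The plan is to derive the Corollary from Theorem~\ref{thm:lim-BOGC} by an approximation argument. The point is that for a real-valued $f\in\dH(\mathbb{R})$ the symbol $h=\exp(f_--f_+)$ is automatically unimodular and lies in $\HH(1/2,\infty)$: indeed $\widehat f(-u)=\overline{\widehat f(u)}$ forces $f_-=\overline{f_+}$, hence $f_--f_+=-2i\,\im f_+=i\psi$ with $\psi:=-2\,\im f_+\in\dH(\mathbb{R})$ real, so $h=e^{i\psi}$ with $|h|\equiv1$ and $\|h-1\|_{\dH(\mathbb{R})}\le\|\psi\|_{\dH(\mathbb{R})}$. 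Therefore the right-hand side of \eqref{eq:thm-lim-BOGC} is meaningful even when $f_--f_+\notin L_\infty(\mathbb{R})$; only that last membership, i.e.\ the boundedness of $\psi$, is in question.

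First I would set up the approximating family. With $\psi$ as above one has $f=\mathcal H\psi$, where $\mathcal H$ is the Hilbert transform (the Fourier multiplier by $-i\sgn u$), an isometry of $\dH(\mathbb{R})$, and $h^{-1}=\overline h$. For $n\in\mathbb N$ put $\psi^{(n)}:=\max(-n,\min(n,\psi))$ and $f^{(n)}:=\mathcal H\psi^{(n)}$. Then $f^{(n)}$ is real, $f^{(n)}\in\dH(\mathbb{R})$, and $f^{(n)}_--f^{(n)}_+=i\psi^{(n)}\in L_\infty(\mathbb{R})$, so Theorem~\ref{thm:lim-BOGC} applies to $f^{(n)}$ and, more generally, to $cf^{(n)}$ for every real $c$. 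Since truncation is $1$-Lipschitz and $\psi$ is a.e.\ finite, dominated convergence in the double integral defining $\|\cdot\|_{\dH(\mathbb{R})}$ — the integrand being dominated by $4|\psi(x)-\psi(y)|^2(x-y)^{-2}$ and tending to $0$ a.e. — gives $\psi^{(n)}\to\psi$ in $\dH(\mathbb{R})$, together with $\psi^{(n)}\to\psi$ a.e.; applying $\mathcal H$ yields $f^{(n)}\to f$ in $\dH(\mathbb{R})$, whence also $f^{(n)}_\pm\to f_\pm$ in $\dH(\mathbb{R})$.

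Next I would pass to the limit in the right-hand side of \eqref{eq:thm-lim-BOGC} written for $f^{(n)}$. The bilinear form $\langle\cdot,\cdot\rangle_{\dH(\mathbb{R})}$ is bounded on $\dH(\mathbb{R})\times\dH(\mathbb{R})$ by Cauchy--Schwarz, so the prefactor converges to $\exp\bigl(\tfrac1{4\pi^2}\langle f_+,\widetilde{f_-}\rangle_{\dH(\mathbb{R})}\bigr)$. For the determinant, the Hilbert--Schmidt norm of $\Ha\bigl(r(\tfrac{\cdot}{2\pi})\bigr)$ is at most $\tfrac1{2\pi}\|r\|_{\dH(\mathbb{R})}$, so it suffices to show $h^{(n)}\to h$ and $\overline{h^{(n)}}\to\overline h$ in $\dH(\mathbb{R})$ (recall $h^{-1}=\overline h$, and reflection is an isometry of $\dH(\mathbb{R})$). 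With $\delta_n:=\psi^{(n)}-\psi$, the identity $e^{ia}-e^{ib}=e^{ib}(e^{i(a-b)}-1)$ gives
\begin{multline*}
	\bigl|(h^{(n)}-h)(x)-(h^{(n)}-h)(y)\bigr|\\
	\le\bigl|e^{i\delta_n(x)}-e^{i\delta_n(y)}\bigr|+\bigl|e^{i\psi(x)}-e^{i\psi(y)}\bigr|\cdot\bigl|e^{i\delta_n(y)}-1\bigr|.
\end{multline*}
The contribution of the first summand to $\|h^{(n)}-h\|_{\dH(\mathbb{R})}^2$ is at most $2\|\delta_n\|_{\dH(\mathbb{R})}^2\to0$, and that of the second equals $2\int_{\mathbb{R}}V(y)\,\bigl|e^{i\delta_n(y)}-1\bigr|^2\,dy$, where $V(y):=\int_{\mathbb{R}}|e^{i\psi(x)}-e^{i\psi(y)}|^2(x-y)^{-2}\,dx$ satisfies $\int_{\mathbb{R}}V=\|e^{i\psi}\|_{\dH(\mathbb{R})}^2\le\|\psi\|_{\dH(\mathbb{R})}^2<\infty$; since $\delta_n\to0$ a.e.\ and $|e^{i\delta_n}-1|^2\le4$, dominated convergence makes this tend to $0$. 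The same estimate applies to $\overline{h^{(n)}}=e^{-i\psi^{(n)}}$. Hence both Hankel factors converge in Hilbert--Schmidt norm, their product converges in trace norm, composing with the fixed projection $\chi_{(1,+\infty)}$ on either side preserves this, and by continuity of the Fredholm determinant with respect to the trace norm the determinant in \eqref{eq:thm-lim-BOGC} for $f^{(n)}$ converges to the one for $f$.

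Finally, on the left-hand side, $|\SINE(x,y)|^2\le\pi^{-2}(x-y)^{-2}$ yields the continuous embedding $\dH(\mathbb{R})\hookrightarrow\dH(\SINE)$, so $\overline S_{f^{(n)}}\to\overline S_f$ in $L_2(\Prob_{\SINE})$, and hence a.e.\ along a subsequence $(n_k)$. Applying the argument of the preceding paragraph to $2f^{(n)}$ shows that the right-hand side of \eqref{eq:thm-lim-BOGC} for $2f^{(n)}$ converges, hence is bounded in $n$; since $\overline S_{2f^{(n)}}=2\overline S_{f^{(n)}}$, Theorem~\ref{thm:lim-BOGC} then gives $\sup_n\mathbb{E}_{\Prob_{\SINE}}\exp(2\overline S_{f^{(n)}})<\infty$, so the family $\{\exp(\overline S_{f^{(n)}})\}$ is bounded in $L_2(\Prob_{\SINE})$ and therefore uniformly integrable. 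Consequently $\mathbb{E}_{\Prob_{\SINE}}\exp(\overline S_{f^{(n_k)}})\to\mathbb{E}_{\Prob_{\SINE}}\exp(\overline S_f)$; combining this with the limit of the right-hand sides found above and with Theorem~\ref{thm:lim-BOGC} applied to each $f^{(n)}$ yields \eqref{eq:thm-lim-BOGC} for $f$. The step I expect to be the main obstacle is establishing that $\psi^{(n)}\to\psi$ in $\dH(\mathbb{R})$ forces $e^{i\psi^{(n)}}\to e^{i\psi}$ in $\dH(\mathbb{R})$ \emph{without} any uniform $L_\infty$ bound on the $\psi^{(n)}$: this is precisely where one must invoke the pointwise a.e.\ convergence of the truncations and the dominated convergence argument for the double integral, rather than the algebra property of $\HH(1/2,\infty)$.
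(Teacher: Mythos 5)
Your proof is correct, but it is a genuinely different (and far more detailed) realization of the continuity argument than the one the paper gives. The paper's proof of this Corollary is a single sentence: for real $f$ the symbol $h$ is unimodular, hence $h\in\HH(1/2,\infty)$ and $\|\Ha(h)\|\le 1$ (the two Hankel factors being mutually adjoint), and the formula persists ``even if $f$ is unbounded'' --- the approximation of $f$ and the passage to the limit on both sides are left entirely implicit. You supply exactly the missing content: a concrete approximating family obtained by truncating $\psi=\mathcal{H}f$ at height $n$ (so that $f^{(n)}$ stays real and Theorem~\ref{thm:lim-BOGC} applies to it); the proof that $\psi\mapsto e^{i\psi}$ is continuous along this family from real-valued $\dH(\mathbb{R})$ into $\dH(\mathbb{R})$ \emph{without} any uniform $L_\infty$ control --- which, as you correctly flag, is the crux, and which you handle by dominated convergence against the marginal $V(y)$ --- hence Hilbert--Schmidt convergence of the Hankel factors and trace-norm convergence of their product; and, on the left-hand side, a uniform-integrability argument extracted by applying the same estimates to $2f^{(n)}$. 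Notably you never use the operator-norm bound $\|\Ha(h)\|\le 1$ that the paper's proof invokes; your route goes entirely through Hilbert--Schmidt estimates controlled by the $\dH$ seminorm, which is arguably more robust since it quantifies the convergence of the determinant rather than merely its well-definedness. The only loose ends are cosmetic: the sign convention relating $f$ and $\psi=\mathcal{H}f$, and the need to fix a genuine a.e.-finite function representative of $\psi$ in the homogeneous space $\dH(\mathbb{R})$ (defined only modulo constants) so that the truncation $\max(-n,\min(n,\psi))$ and the a.e.\ statements make sense; neither affects the argument.
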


\begin{proof}[Proof of the corollary.]
	If $f$ is real-valued, then $|h|\equiv 1$, $h\in\HH(1/2,\infty)$ and $\|\Ha(h)\|\le 1$, so the formula~\eqref{eq:thm-lim-BOGC} holds even if $f$ is unbounded.
\end{proof}

Theorem~\ref{thm:lim-BOGC} is proven by passing to a scaling limit in its discrete counterpart, the Borodin---Okounkov---Geronimo---Case formula.

It is enough to prove Theorem~\ref{thm:lim-BOGC} for smooth functions with compact support. The general case immediately follows from the continuity of both sides in~\eqref{eq:thm-lim-BOGC} in the space $\HH(1/2,\infty)$.

\begin{remark*}
	Under more restrictive assumptions on $f$ an equivalent of the formula~\eqref{eq:thm-lim-BOGC} has been obtained by Basor and Chen~\cite{BasorChen}. A special case of the formula~\eqref{eq:thm-lim-BOGC} has been used in~\cite{Bufetov} to prove that almost all realizations of the sine-process have excess one in the Paley---Wiener space.
\end{remark*}

\subsection{Historical remarks}
Gabor Szeg\H{o} proved the Polya conjecture, which is now known as the First Szeg\H{o} Theorem~\cite{Szego1}, in 1915, the same year he enlisted in Royal Honv\'ed cavalry as a volunteer. The Second (or Strong) Szeg\H{o} Theorem~\cite{Szego2} was proven 37 years lated, in 1952. For a review of its history see~\cite{DeiftItsKras,Simon}. Necessary and sufficient conditions for the validity of the~Strong Szeg\H{o} Theorem  were established by Ibragimov~\cite{Ibragimov,GolIbr}. Geronimo and Case~\cite{GeronimoCase} have proved formula~\eqref{eq:BOGC-formula} in 1979. It was later rediscovered in 2000 by Borodin and Okounkov~\cite{BorodinOkounkov}. Now there exists several proofs of the Borodin---Okounkov---Geronimo---Case formula~\cite{BasorWidom,Boettcher1,Boettcher2}, but the question of necessary and sufficient conditions for the formula is still open.

\subsection{Acknowledgments}

I am deeply grateful to Sergey Gorbunov and Roman Romanov for useful discussions and to anonymous referee for the thorough review of the paper and numerous improvements suggested.

\section{Beginning of the proof of Theorem~\ref{thm:lim-BOGC}: Borodin---Okounkov---Geronimo---Case formula}

Let us recall the Borodin---Okounkov---Geronimo---Case formula.

Denote $\mathbb{T}=\mathbb{R}/2\pi\mathbb{Z}$. For a function $F\in L_1(\mathbb{T})$ with Fourier expansion
\begin{equation*}
	\widehat F(k)=\frac{1}{2\pi}\int_{\mathbb{T}}F(\theta)e^{-ik\theta}\,d\theta,
\end{equation*}
we define the Toeplitz operator $T(F)$ with symbol $F$ as the operator acting on functions $\varphi$ with finite support by the formula
\begin{equation*}
	T(F)\varphi(k)=\sum_{l\in\mathbb{N}} \widehat{F}(k-l)\varphi(l),
\end{equation*}
The Hankel operator $\Hb(F)$ with the symbol $F$ is defined by the formula
\begin{equation*}
	\Hb(F)\varphi(k)=\sum_{l\in\mathbb{N}} \widehat{F}(k+l-1)\varphi(l).
\end{equation*}
If $F\in L_\infty(\mathbb{T})$, operators $T(F)$ and $\Hb(F)$ are bounded on $l_2(\mathbb{N})$.
As in the case of the real line, by the symbol $\widetilde{\phantom{a}}$ we denote the reflection with respect to zero:
\begin{equation*}
	\widetilde{F}(\theta)=\sum_{k\in\mathbb{Z}} \widehat{F}(-k)e^{ik\theta}=F(-\theta).
\end{equation*}
Let $D_n(F)$ stand for the $n\times n$ Toeplitz determinant corresponding to a symbol $F$, that is,
\begin{equation*}
	D_n(F)=\det (T(F)_{ij})_{i,j=1,\dots,n}.
\end{equation*}
The Andreief formula~\cite{Andreief}
\begin{multline*}
	\int_X\cdots\int_X
	\det(\varphi_i(x_j))_{i,j=1,\dots,n}\cdot
	\det(\psi_i(x_j))_{i,j=1,\dots,n}\,dx_1\dots dx_n={}\\
	{}=\det\biggl(\int_X \varphi_i(x)\psi_j(x)\,dx\biggr)_{i,j=1,\dots,n}
\end{multline*}
implies that for any function $G\in L_1(\mathbb{T})$ we have
\begin{equation}\label{eq:Andreev-DnG}
	D_n(G)=\frac{1}{n!}\int_{\mathbb{T}}\cdots\int_{\mathbb{T}}
	|e^{i\theta_k}-e^{i\theta_l}|^2\cdot
	\prod_{k=1}^{n} G(\theta_k)\frac{d\theta_k}{2\pi}=\det(1+(G-1)K_n),
\end{equation}
where
\begin{equation*}
	K_n(\theta,\theta')=\frac{\sin\frac{n+1}{2}(\theta-\theta')}{\sin\frac{1}{2}(\theta-\theta')}
\end{equation*}
is the $n$-th Dirichlet kernel.

As in the previous section, introduce the Sobolev space of order $1/2$ on the unit circle $\mathbb{T}=\mathbb{R}/2\pi\mathbb{Z}$, the respective seminorm and the bilinear form as follows:
\begin{gather*}
	H_{1/2}(\mathbb{T})=\{f\in L_2(\mathbb{T}): \sum_{k\in\mathbb{Z}}|k|\cdot |\widehat{F}(k)|^2<+\infty\},\\
	\|F\|_{\dH(\mathbb{T})}^2=\sum_{k\in\mathbb{Z}}|k|\cdot |\widehat{F}(k)|^2,\\
	\langle F_1,F_2\rangle_{\dH(\mathbb{T})}= \sum_{k\in\mathbb{Z}}|k|\cdot \widehat{F_1}(k)\widehat{F_2}(-k).
\end{gather*}
Therefore,
\begin{equation*}
	\|F\|_{\dH(\mathbb{T})}^2=
	\langle F,\overline{F}\rangle_{\dH(\mathbb{T})}.
\end{equation*}
Following Borodin and Okounkov, consider a square-integrable function on the unit circle $\mathbb{T}=\mathbb{R}/2\pi\mathbb{Z}$
\begin{equation*}
	F(\theta)=\sum_{k\in\mathbb{Z}} \widehat{F}(k)e^{ik\theta}
\end{equation*}
with zero average: $\widehat F(0)=0$.
Denote
\begin{equation*}
	F_+(\theta)=\sum_{k>0} \widehat{F}(k)e^{ik\theta},\quad
	F_-(\theta)=\sum_{k<0} \widehat{F}(k)e^{ik\theta}.
\end{equation*}
Define the function $N$ by the formula
\begin{equation*}
	N(\theta)=\exp(F_-(\theta)-F_+(\theta)).
\end{equation*}
We are now ready to formulate the Borodin---Okounkov---Geronimo---Case theorem.

\begin{theorem}[\cite{BorodinOkounkov,GeronimoCase}]
Let $F\in H_{1/2}(\mathbb{T})$, $\widehat F(0)=0$, $F_--F_+\in L_\infty(\mathbb{T})$. Then we have
\begin{multline}\label{eq:BOGC-formula}
	D_n(\exp(F))={}\\
	{}=
	\exp\biggl(\sum_{k>0} k\widehat{F}(k)\widehat{F}(-k)\biggr)\cdot
	\det\bigl(1-\chi_{[n+1,+\infty)}\Hb(N)\Hb(\widetilde{N^{-1}})\chi_{[n+1,+\infty)}\bigr).
\end{multline}
\end{theorem}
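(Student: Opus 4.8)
The plan is to prove \eqref{eq:BOGC-formula} by the operator-theoretic method of Basor--Widom. First I would reduce to the case when $F$ is a trigonometric polynomial: both sides of \eqref{eq:BOGC-formula} are continuous on the set of symbols meeting the hypotheses — the prefactor through the form $\langle F,\overline F\rangle_{\dH(\mathbb{T})}$, and the Fredholm determinant through the Hilbert--Schmidt norms of $\Hb(N)$ and $\Hb(\widetilde{N^{-1}})$, which are controlled by $\|F\|_{\dH(\mathbb{T})}$ and $\|F_--F_+\|_{L_\infty(\mathbb{T})}$ — and trigonometric polynomials are dense. For such $F$ the functions $\phi_+:=\exp(F_+)$, $\phi_-:=\exp(F_-)$ and their reciprocals lie in the Wiener algebras of the disc and of its exterior respectively, with constant term $1$, and $\exp F=\phi_-\phi_+=\phi_+\phi_-$. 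Next I would record the Toeplitz--Hankel calculus in the conventions of this paper: $T(FG)=T(F)T(G)+\Hb(F)\Hb(\widetilde G)$, $\Hb(FG)=T(F)\Hb(G)+\Hb(F)T(\widetilde G)$, and $\Hb(F)=0$ whenever $\widehat F(k)=0$ for all $k\ge 1$. Since $\widehat{\phi_-}$ and $\widehat{\widetilde{\phi_+}}$ are supported in $(-\infty,0]$, this yields $T(\exp F)=T(\phi_-)T(\phi_+)$ with both factors invertible and triangular with unit diagonal (whence $T(\exp F)^{-1}=T(\phi_+^{-1})T(\phi_-^{-1})$), together with the factorizations $\Hb(N)=T(\phi_-)\Hb(\phi_+^{-1})$ and $\Hb(\widetilde{N^{-1}})=T(\widetilde{\phi_+})\Hb(\widetilde{\phi_-^{-1}})$.

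The backbone is the identity $D_n(\exp F)=\det\bigl(P_nT(\exp F)P_n\bigr)$ — equivalently $\det(1+(\exp F-1)K_n)$ by the Andreief identity \eqref{eq:Andreev-DnG} — with $P_n=\chi_{\{1,\dots,n\}}$ and $Q_n=1-P_n=\chi_{[n+1,+\infty)}$, combined with the Jacobi relation for complementary minors. Applied to the $m\times m$ truncation $T_m=P_mT(\exp F)P_m$ ($m\ge n$), and using that the leading $n\times n$ block of $T_m$ is again the Toeplitz matrix of $\exp F$, it reads
\begin{equation*}
	D_n(\exp F)=\det T_m\cdot\det\bigl((T_m^{-1})_{[n+1,m]\times[n+1,m]}\bigr).
\end{equation*}
Letting $m\to\infty$: by the strong Szeg\H{o} theorem $\det T_m=D_m(\exp F)\to E:=\exp\bigl(\sum_{k>0}k\widehat F(k)\widehat F(-k)\bigr)$ (the geometric mean of $\exp F$ equals $1$ since $\widehat F(0)=0$); and a routine trace-norm limiting argument shows that $\det\bigl((T_m^{-1})_{[n+1,m]\times[n+1,m]}\bigr)$ converges to $\det\bigl(Q_nT(\exp F)^{-1}Q_n\bigr)$, understood as follows. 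Inserting $P_n+Q_n$ between the two Toeplitz factors of $T(\exp F)^{-1}=T(\phi_+^{-1})T(\phi_-^{-1})$ and splitting off the unit-triangular blocks $Q_nT(\phi_+^{-1})Q_n$, $Q_nT(\phi_-^{-1})Q_n$ (each of determinant $1$) leaves $\det(1+R)$ for a remainder $R$ of rank at most $n$; so $D_n(\exp F)=E\cdot\det(1+R)$.

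It remains to recognize $R$. Using $T(\phi_+^{-1})T(\phi_+)=I=T(\phi_-^{-1})T(\phi_-)$ to evaluate the compressions, then the Hankel factorizations of the first paragraph together with the reflection $\widetilde{\,\cdot\,}$ — which converts the co-analytic block of $T(\phi_-^{-1})$ into a Hankel operator and matches compressions of Hankel operators on $l_2(\mathbb{N})$ with shifted Hankel operators — one is led to $\det(1+R)=\det\bigl(1-Q_n\Hb(N)\Hb(\widetilde{N^{-1}})Q_n\bigr)$, which together with the preceding step is \eqref{eq:BOGC-formula}. The cleanest way to fix signs and normalizations is the case $n=0$, where $Q_0$ is the identity of $l_2(\mathbb{N})$: \eqref{eq:BOGC-formula} then claims $1=E\cdot\det\bigl(1-\Hb(N)\Hb(\widetilde{N^{-1}})\bigr)$, i.e., by the Widom identity, $\det\bigl(T(N)T(N^{-1})\bigr)=E^{-1}$ — precisely Szeg\H{o}'s theorem in operator form, since $\widehat{\log N}(k)=-\widehat F(k)$ for $k>0$.

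The main obstacle is that the Jacobi relation, read literally, is an equality of two divergent quantities, so the whole argument must be arranged so that only operators of the form $1+(\text{trace class})$ ever occur — which is exactly where the hypotheses enter: $F\in H_{1/2}(\mathbb{T})$ makes $\Hb(N)$ and $\Hb(\widetilde{N^{-1}})$ Hilbert--Schmidt, hence their product trace class, while $F_--F_+\in L_\infty(\mathbb{T})$ makes $N$ and $N^{-1}$ bounded, hence $\Hb(N)$ and $\Hb(N^{-1})$ bounded. Everything else is clerical: tracking the conventions for the Hankel index and for the reflection $\widetilde{\,\cdot\,}$, and the shift that matches the $l_2(\mathbb{N})$ truncation picture with the operators on $L_2(\mathbb{T})$. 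One can also avoid infinite determinants entirely by arguing through orthogonal polynomials on the unit circle and the Szeg\H{o} recursion, telescoping the ratios $D_n/D_{n-1}$, as in the original Geronimo--Case proof.
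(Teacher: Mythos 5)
The paper does not prove this statement at all: the theorem is quoted from \cite{GeronimoCase,BorodinOkounkov}, with \cite{BasorWidom,Boettcher1,Boettcher2} cited for operator-theoretic proofs, so there is no internal argument to compare yours against. What you outline is in substance the Basor--Widom proof (your finite-truncation route through Jacobi's identity for complementary minors, with the unit-triangular blocks disposed of at the finite-matrix level before passing to the limit, is closest to B\"ottcher's version \cite{Boettcher2}), and the outline is sound: the Toeplitz--Hankel identities and the factorizations $T(e^F)=T(\phi_-)T(\phi_+)$, $\Hb(N)=T(\phi_-)\Hb(\phi_+^{-1})$, $\Hb(\widetilde{N^{-1}})=T(\widetilde{\phi_+})\Hb(\widetilde{\phi_-^{-1}})$ are correct in the paper's conventions, the hypotheses enter exactly where you say they do, and your $n=0$ consistency check (namely $E\cdot\det T(N)T(N^{-1})=1$ via Widom's formula, using $\widehat{\log N}(k)=-\widehat F(k)$ for $k>0$) comes out right. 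Two places deserve more care than you give them. First, in the density reduction, approximating $F$ by partial sums of its Fourier series controls $\|F\|_{\dH(\mathbb{T})}$ but not $\|F_--F_+\|_{L_\infty(\mathbb{T})}$ along the sequence; you should use Fej\'er or de la Vall\'ee Poussin means, which commute with the Riesz projections and are $L_\infty$-contractive, so that $N$ stays uniformly bounded and the Hilbert--Schmidt continuity of $F\mapsto\Hb(N)$ survives the limit (this is the same continuity argument the paper itself invokes for its scaling lemma). Second, the sentence ``one is led to $\det(1+R)=\det\bigl(1-Q_n\Hb(N)\Hb(\widetilde{N^{-1}})Q_n\bigr)$'' conceals the entire computational content of the proof: $R$ has rank at most $n$ while $Q_n\Hb(N)\Hb(\widetilde{N^{-1}})Q_n$ generally does not, so the equality is one of determinants, obtained via $\det(1+AB)=\det(1+BA)$ together with the factorizations above, not an identity of operators; this computation is precisely what \cite{BasorWidom} carries out and must be written down. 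With those two points supplied, your argument is a complete and legitimate proof of the quoted theorem.
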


If $F$ is real-valued, then $|N(\theta)|=|\widetilde{N^{-1}}(\theta)|=1$ for all $\theta\in\mathbb{T}$.
It is clear from definitions that the Hankel operators $\Hb(N)$ and $\Hb(\widetilde{N^{-1}})$ are adjoint to each other.

\begin{corollary}
	If $F\in H_{1/2}(\mathbb{T})$ is real-valued, then~\eqref{eq:BOGC-formula} holds.
\end{corollary}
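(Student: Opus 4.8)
The plan is to derive the corollary from the Borodin--Okounkov--Geronimo--Case theorem above --- whose hypotheses include $F_--F_+\in L_\infty(\mathbb{T})$ --- by approximating $F$ with the partial sums of its Fourier series. So fix a real-valued $F\in H_{1/2}(\mathbb{T})$ satisfying $\widehat F(0)=0$ and set $F^{(m)}(\theta)=\sum_{0<|k|\le m}\widehat F(k)e^{ik\theta}$. Since $\widehat F(-k)=\overline{\widehat F(k)}$, each $F^{(m)}$ is a real, zero-mean trigonometric polynomial; being bounded, it obeys the hypotheses of the theorem, so \eqref{eq:BOGC-formula} holds with $F$ replaced by $F^{(m)}$. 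As $F^{(m)}\to F$ in $H_{1/2}(\mathbb{T})$, the corollary will follow once one passes to the limit $m\to\infty$ in each side of \eqref{eq:BOGC-formula} for $F^{(m)}$.

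The right-hand side is the easy side. Put $\phi:=-i(F_--F_+)$; then $\phi$ is real, $\widehat\phi(0)=0$, and $|\widehat\phi(k)|=|\widehat F(k)|$, so $\|\phi\|_{\dH(\mathbb{T})}=\|F\|_{\dH(\mathbb{T})}$, while $N=e^{i\phi}$ has $|N|\equiv1$ (as already observed), and so do $N^{-1}=\overline N$ and $\widetilde{N^{-1}}$. Substituting the elementary inequality $|e^{ia}-e^{ib}|\le|a-b|$ into the Gagliardo representation of $\|\cdot\|_{\dH(\mathbb{T})}$ (the circle version of the formula for $\|\cdot\|_{\dH(\mathbb{R})}$ recalled above) gives $\|e^{ig}\|_{\dH(\mathbb{T})}\le C\|g\|_{\dH(\mathbb{T})}$ for real $g$, so $N$, and with it $N^{-1}$ and $\widetilde{N^{-1}}$, lies in $H_{1/2}(\mathbb{T})\cap L_\infty(\mathbb{T})$. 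Since $\|\Hb(G)\|_{\mathrm{HS}}^2=\sum_{k\ge1}k|\widehat G(k)|^2\le\|G\|_{\dH(\mathbb{T})}^2$, the Hankel operators $\Hb(N)$, $\Hb(\widetilde{N^{-1}})$ are Hilbert--Schmidt, the operator under the determinant in \eqref{eq:BOGC-formula} is trace class, and the right-hand side is well defined for $F$ itself. The same inequality, used via $N^{(m)}-N=N\bigl(e^{i(\phi^{(m)}-\phi)}-1\bigr)$ with $\phi^{(m)}:=-i(F^{(m)}_--F^{(m)}_+)\to\phi$ in $L_2(\mathbb{T})$, shows that $g\mapsto e^{ig}$ is continuous on the real subspace of $H_{1/2}(\mathbb{T})$; hence $N^{(m)}\to N$ and $\widetilde{(N^{(m)})^{-1}}\to\widetilde{N^{-1}}$ in $H_{1/2}(\mathbb{T})$, the Hankel operators converge in Hilbert--Schmidt norm, the sandwiched products converge in trace norm, and --- together with $\sum_{0<k\le m}k|\widehat F(k)|^2\to\sum_{k>0}k|\widehat F(k)|^2$ and the continuity of the Fredholm determinant in the trace norm --- the right-hand side of \eqref{eq:BOGC-formula} for $F^{(m)}$ converges to that for $F$.

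It remains to treat the left-hand side, i.e. to show $D_n(\exp F^{(m)})\to D_n(\exp F)$; as $D_n$ is a fixed finite determinant in the Fourier coefficients of its symbol, it is enough that $\exp F^{(m)}\to\exp F$ in $L_1(\mathbb{T})$. Here I would use the Moser--Trudinger inequality for $\dH(\mathbb{T})$: for real $G\in H_{1/2}(\mathbb{T})$ with $\widehat G(0)=0$, the integral $\int_{\mathbb{T}}e^{pG}\,d\theta$ is finite for every $p<\infty$ and bounded in terms of $p$ and $\|G\|_{\dH(\mathbb{T})}$ alone. In particular $\exp F\in L_1(\mathbb{T})$ --- so that the left-hand side of \eqref{eq:BOGC-formula} is even meaningful --- and $\sup_m\|\exp F^{(m)}\|_{L_2(\mathbb{T})}<\infty$, because $\|F^{(m)}\|_{\dH(\mathbb{T})}\le\|F\|_{\dH(\mathbb{T})}$. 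Writing $G^{(m)}:=F-F^{(m)}$, one has $\|\exp F-\exp F^{(m)}\|_{L_1}\le\|\exp F^{(m)}\|_{L_2}\,\|\exp G^{(m)}-1\|_{L_2}$, and the second factor tends to $0$ by $|e^x-1|\le|x|e^{|x|}$, Hölder's inequality, the uniform Moser--Trudinger bound for $e^{\pm G^{(m)}}$, and the Sobolev embedding $H_{1/2}(\mathbb{T})\hookrightarrow L_4(\mathbb{T})$ (so $G^{(m)}\to0$ in $L_4(\mathbb{T})$). Hence $\exp F^{(m)}\to\exp F$ in $L_1(\mathbb{T})$, each Toeplitz coefficient converges, $D_n(\exp F^{(m)})\to D_n(\exp F)$, and passing to the limit in \eqref{eq:BOGC-formula} for $F^{(m)}$ yields it for $F$.

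I expect the main obstacle to be the left-hand side: one needs, a priori, both the integrability of $\exp F$ and the $L_1$-equi-integrability of $\{\exp F^{(m)}\}$ even though $F$ may be unbounded, and this is exactly where the borderline failure $H_{1/2}(\mathbb{T})\not\hookrightarrow L_\infty(\mathbb{T})$ forces one to appeal to exponential integrability (Moser--Trudinger) in place of a naive bound. The right-hand side, by comparison, is controlled routinely once one notes that $|N|\equiv1$ puts $N$ in $L_\infty(\mathbb{T})$ while the estimate $|e^{ia}-e^{ib}|\le|a-b|$ keeps $N$, and its approximants $N^{(m)}$, in $\dH(\mathbb{T})$.
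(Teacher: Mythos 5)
Your route is genuinely different from the paper's. The paper proves this corollary in the two sentences preceding it: since the hypothesis $F_--F_+\in L_\infty(\mathbb{T})$ enters the proof of the Borodin---Okounkov---Geronimo---Case theorem only to guarantee that $N$, $\widetilde{N^{-1}}$ lie in $L_\infty\cap H_{1/2}$ and hence that the Hankel operators are Hilbert---Schmidt, and since for real $F$ one has $|N|\equiv|\widetilde{N^{-1}}|\equiv 1$ with $\Hb(N)$ and $\Hb(\widetilde{N^{-1}})$ mutually adjoint contractions, the proof of the theorem goes through verbatim without the boundedness assumption (compare the paper's proof of the corollary to Theorem~\ref{thm:lim-BOGC}, which is exactly this observation for $h$). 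You instead keep the theorem as a black box and approximate $F$ by its Fourier partial sums, passing to the limit on both sides of~\eqref{eq:BOGC-formula}. That is a legitimate and more self-contained strategy --- it does not require re-opening the proof of the theorem --- at the price of two nontrivial external inputs: exponential integrability of real $H_{1/2}(\mathbb{T})$ functions (Lebedev---Milin/Moser---Trudinger) for the left-hand side, and continuity of $g\mapsto e^{ig}$ on the real part of $\dH(\mathbb{T})$ for the right-hand side.

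There is one step whose justification, as written, does not work: the continuity of $g\mapsto e^{ig}$. You derive it from the factorization $N^{(m)}-N=N\bigl(e^{i(\phi^{(m)}-\phi)}-1\bigr)$ together with the Lipschitz bound $\|e^{ig}-1\|_{\dH(\mathbb{T})}\le C\|g\|_{\dH(\mathbb{T})}$. But the product rule for the Gagliardo seminorm gives
\begin{equation*}
\|N(e^{i\psi}-1)\|_{\dH(\mathbb{T})}\le \|N\|_{L_\infty}\|e^{i\psi}-1\|_{\dH(\mathbb{T})}+\|e^{i\psi}-1\|_{L_\infty}\|N\|_{\dH(\mathbb{T})},
\end{equation*}
and the second term does not tend to zero: $\psi^{(m)}=\phi^{(m)}-\phi\to 0$ in $\dH(\mathbb{T})$ does not force $\|e^{i\psi^{(m)}}-1\|_{L_\infty}\to 0$, precisely because $H_{1/2}(\mathbb{T})\not\hookrightarrow L_\infty(\mathbb{T})$ --- the same borderline failure you flag for the left-hand side. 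The continuity statement itself is true, but it needs a different argument, e.g.\ generalized dominated convergence applied to the difference quotients $(e^{i\phi^{(m)}(\theta)}-e^{i\phi^{(m)}(\theta')})/|e^{i\theta}-e^{i\theta'}|$, which are dominated in $L_2(d\theta\,d\theta')$ by the convergent majorants $|\phi^{(m)}(\theta)-\phi^{(m)}(\theta')|/|e^{i\theta}-e^{i\theta'}|$ and converge pointwise along subsequences. With that repair (and with $\dH$- rather than merely $L_2$-convergence of $\phi^{(m)}$ to $\phi$, which you do have since $\widehat{\phi^{(m)}}=\widehat{\phi}\,\chi_{[-m,m]}$), your argument closes.
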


Borodin and Okounkov emphasize that the identity~\eqref{eq:BOGC-formula} can be considered as an equality between formal power series in $\widehat{F}(k)$.

\begin{remark*}
	The Second Szeg\H{o} theorem for $G=\exp(F)$, stating that
	\begin{equation*}
		\lim_{n\to\infty}D_n(G)=\exp\biggr(\sum_{k\in\mathbb{Z}}k\widehat{F}(k)\widehat{F}(-k)\biggr),
	\end{equation*}
	holds under the assumptions $F\in H_{1/2}(\mathbb{T})$, $\widehat{F}(0)=0$. If the function $F$ is not assumed to be real-valued, the additional condition $F_--F_+\in L_\infty(\mathbb{T})$ is required in the proof of the Borodin---Okounkov---Geronimo---Case formula to ensure that the respective Hankel operators are Hilbert---Schmidt.
\end{remark*}

\begin{question*}
	Is the condition $F_--F_+\in L_\infty(\mathbb{T})$ necessary for the Borodin---Okounkov---Geronimo---Case formula to hold? What are the necessary and sufficient conditions?
\end{question*}

\begin{remark*}Our definition slightly differs from the convention of Borodin and Okounkov, who considered the function $F_-(\pi-\theta)-F_+(\pi-\theta)$. However, the resulting kernels differ by a gauge factor of $(-1)^{i+j}$ and, therefore, have the same Fredholm determinants.
\end{remark*}

\section{Scaling limit of the Borodin---Okounkov---Geronimo---Case formula}

Our next step is to pass to the scaling limit in the Borodin---Okounkov---Geronimo---Case formula under scaling $R_n(\theta)=r(n\theta)$.

Let $r$ be a smooth compactly supported function with zero average. For large enough $n\in\mathbb{N}$ the support of the function $r(n\varphi)$ lies in the interval $(-\pi, \pi)$. Define the functions $R_n$ on $\mathbb{T}=\mathbb{R}/2\pi\mathbb{Z}$ by the formula
\begin{equation}\label{eq:Rn}
	R_n(\theta)=r(n\theta),\quad \theta\in[-\pi,\pi]. 	
\end{equation}
Introduce decompositions $r=r^+-r^-$, $R_n=R_n^++R_n^-$ into the positive and negative harmonics:
\begin{equation*}
	\supp \widehat{r^+}^{\mathbb{R}}\subset [0,+\infty),\quad
	\supp \widehat{r^-}^{\mathbb{R}}\subset (-\infty,0].
\end{equation*}
Here and below we denote Fourier transforms on $\mathbb{R}$ and on $\mathbb{T}=\mathbb{R}/2\pi\mathbb{Z}$ by $\widehat f^{\:\mathbb{R}}$ and $\widehat f^{\:\mathbb{T}}$ respectively.
By definitions, we have 
\begin{equation*}
	\widehat{R_n}^{\mathbb{T}}(k)=\frac{1}{2\pi n} \widehat{r}^{\,\mathbb{R}}\biggl(\frac{k}{n}\biggr).
\end{equation*}

\begin{lemma}Let $r$ be a smooth compactly supported function on $\mathbb{R}$, $h=\exp(r^--r^+)$. 
	Then we have
	\begin{multline}\label{eq:lim-DnRn}
		\lim_{n\to\infty} D_n(\exp(R_n))={}\\
		{}=
		\exp\biggl(\frac{1}{4\pi^2}\langle r^+,r^-\rangle_{\dH(\mathbb{R})}\biggr)\cdot
		\det\bigl(1-\chi_{[1,+\infty)}\Ha(h) \Ha(\widetilde{h^{-1}})\chi_{[1,+\infty)}\bigr).
	\end{multline}
\end{lemma}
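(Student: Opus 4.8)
**

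The plan is to apply the Borodin–Okounkov–Geronimo–Case formula \eqref{eq:BOGC-formula} to the symbol $F=R_n$ and track each of the three ingredients — the exponential prefactor, the Hankel operators, and the cutoff projection — as $n\to\infty$. First, the prefactor: by the stated relation $\widehat{R_n}^{\mathbb{T}}(k)=\tfrac{1}{2\pi n}\widehat r^{\,\mathbb{R}}(k/n)$, the sum $\sum_{k>0}k\,\widehat{R_n}^{\mathbb{T}}(k)\widehat{R_n}^{\mathbb{T}}(-k)$ equals $\tfrac{1}{4\pi^2}\cdot\tfrac{1}{n}\sum_{k>0}\tfrac{k}{n}\,\widehat r^{\,\mathbb{R}}(k/n)\widehat r^{\,\mathbb{R}}(-k/n)$, which is a Riemann sum converging to $\tfrac{1}{4\pi^2}\int_0^\infty u\,\widehat r^{\,\mathbb{R}}(u)\widehat r^{\,\mathbb{R}}(-u)\,du=\tfrac{1}{4\pi^2}\langle r^+,r^-\rangle_{\dH(\mathbb{R})}$; the smoothness and compact support of $r$ give rapid decay of $\widehat r^{\,\mathbb{R}}$, so the Riemann sum converges and we may pass the exponential through.

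Second, I would identify the scaling limit of the discrete Hankel operators. The symbol entering \eqref{eq:BOGC-formula} is $N_n=\exp(R_n^--R_n^+)$, and by construction $R_n^-(\theta)-R_n^+(\theta)=(r^--r^+)(n\theta)=\log h(n\theta)$, so $N_n(\theta)=h(n\theta)$. The discrete Hankel operator $\Hb(N_n)$ has matrix entries $\widehat{N_n}^{\mathbb{T}}(k+l-1)=\tfrac{1}{2\pi n}\widehat h^{\,\mathbb{R}}((k+l-1)/n)$. Rescaling the index lattice by $1/n$ — i.e. conjugating by the natural isometry $l_2(\mathbb{N})\to L_2(0,\infty)$ that sends the $k$-th basis vector to (roughly) $\sqrt n\,\chi_{((k-1)/n,k/n)}$ — turns $\Hb(N_n)$ into a discretization of the integral kernel $\tfrac{1}{2\pi}\widehat h^{\,\mathbb{R}}(s+t)$ on $L_2(0,\infty)$, which is exactly $\Ha(h)$ after the cosmetic substitution $s\mapsto s/(2\pi)$ absorbed into the statement (and likewise $\Hb(\widetilde{N_n^{-1}})\to\Ha(\widetilde{h^{-1}})$). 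The cutoff $\chi_{[n+1,\infty)}$ on $l_2(\mathbb{N})$ corresponds under this rescaling to $\chi_{[(n+1)/n,\infty)}\to\chi_{[1,\infty)}$ on $L_2(0,\infty)$. So one expects $\chi_{[n+1,\infty)}\Hb(N_n)\Hb(\widetilde{N_n^{-1}})\chi_{[n+1,\infty)}$ to converge to $\chi_{[1,\infty)}\Ha(h)\Ha(\widetilde{h^{-1}})\chi_{[1,\infty)}$.

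Third, to pass from operator convergence to convergence of Fredholm determinants one needs trace-norm control. Since $r$ is smooth with compact support, $r^--r^+$ and hence $h-1$ lie in $\HH(1/2,\infty)$, so by the remark before the theorem $\Ha(h)$ and $\Ha(\widetilde{h^{-1}})$ are Hilbert–Schmidt; the product of the two cutoff Hankel operators is therefore trace-class, and one must show the discrete approximants converge to it in trace norm (or at least that $\chi\Hb(N_n)\chi$ and $\chi\Hb(\widetilde{N_n^{-1}})\chi$ converge in Hilbert–Schmidt norm, which suffices for the product). The Hilbert–Schmidt norm of $\Hb(N_n)$ is $\big(\sum_{m\ge 1} m\,|\widehat{N_n}^{\mathbb{T}}(m)|^2\big)^{1/2}$-type quantity, again a Riemann sum for $\|\log h\|$-type $\dH$-norm, and the rapid decay of $\widehat h^{\,\mathbb{R}}$ makes the discretization error go to zero; the cutoff away from the origin removes any issue near $m=0$.

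The main obstacle is precisely this last step: converting the heuristic ``Riemann sum'' picture into a genuine trace-norm (or Hilbert–Schmidt-norm) convergence estimate uniform enough to pass to the limit inside the Fredholm determinant. One must be careful that the discrete Hankel kernel is being compared with the continuous one on overlapping dyadic-type cells, control the off-diagonal tails using decay of $\widehat h^{\,\mathbb{R}}$ (which follows from $h-1$, equivalently $r^--r^+$, being smooth and rapidly decaying since $r$ is Schwartz-class with compact support), and check that the composition $N_n\mapsto h(n\,\cdot)$ is compatible with taking the exponential on the circle versus on the line — i.e. that $(r^--r^+)$ restricted to $(-\pi/n,\pi/n)$ and viewed on $\mathbb{T}$ has the same Hankel limit as on $\mathbb{R}$, so that no contribution is lost from the difference between circle and line Fourier coefficients. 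Once the trace-norm convergence of the operator is established, the continuity of $\det(1+\cdot)$ on trace-class operators, together with the prefactor limit from the first step, yields \eqref{eq:lim-DnRn}.
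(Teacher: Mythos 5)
Your overall strategy coincides with the paper's: apply the Borodin---Okounkov---Geronimo---Case formula to the symbol $R_n$, identify the prefactor limit as a Riemann sum for $\frac{1}{4\pi^2}\langle r^+,r^-\rangle_{\dH(\mathbb{R})}$ (this part of your argument is correct and is exactly what the paper does), and then pass to the scaling limit in the Fredholm determinant. Where you diverge is the mechanism for the determinant convergence: you propose conjugating $\Hb(N_n)$ by the isometry $l_2(\mathbb{N})\to L_2(0,+\infty)$ and proving Hilbert---Schmidt convergence of each factor, hence trace-norm convergence of the product and continuity of $\det(1+\cdot)$. The paper instead never identifies the two Hilbert spaces: it computes $\tr\bigwedge^{l}$ of the discrete Hankel operators explicitly as multiple sums in the Fourier coefficients, recognizes each as a Riemann sum for the corresponding multiple integral giving $\tr\bigwedge^{l}$ of the continual Hankel operator, and justifies interchanging the limit with the sum over $l$ by the uniform bound $|\tr\bigwedge^{l}K|\le(\tr|K|)^l/l!$ together with $\sup_n\|\widehat{R_n}^{\mathbb{T}}\|_{L_1}<+\infty$. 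Both routes are viable; the paper's avoids any operator-norm comparison across different spaces, while yours, if completed, would give genuine trace-norm convergence. You correctly identify the quantitative discretization estimate as the missing step, but you do not carry it out, so the proof is not complete as written.

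There is also one concrete error. You assert that \emph{by construction} $R_n^-(\theta)-R_n^+(\theta)=(r^--r^+)(n\theta)$, i.e.\ $N_n(\theta)=h(n\theta)$. This is false as an exact identity: $R_n^{\pm}$ are defined by the Riesz projection on $\mathbb{T}$ (restriction of the Fourier \emph{series} to positive or negative $k$), whereas $r^{\pm}$ are defined by the Riesz projection on $\mathbb{R}$, and the circle projection of $r(n\theta)$ is only a Riemann-sum approximation to the scaled line projection, not equal to it. Consequently $\widehat{N_n}^{\mathbb{T}}(m)$ is not literally $\frac{1}{2\pi n}\widehat{h}^{\,\mathbb{R}}(m/n)$; one must show that the Fourier coefficients of $\exp(R_n^--R_n^+)$, obtained by exponentiating the convolution structure of the \emph{discrete} coefficients $\widehat{R_n}^{\mathbb{T}}(k)=\frac{1}{2\pi n}\widehat{r}^{\,\mathbb{R}}(k/n)$, converge to the corresponding continual convolution exponential. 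You do flag the circle-versus-line issue in your final paragraph as something to check, which partly redeems the point, but presenting the identity as holding ``by construction'' and then building the Hankel-symbol identification on it is a gap: this is precisely the place where the two Fourier calculi must be compared, and it is the reason the paper phrases the Hankel part of the argument at the level of Fourier coefficients and traces of exterior powers (where the comparison reduces again to Riemann sums) rather than at the level of symbols.
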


\begin{corollary}
	The relation~\eqref{eq:lim-DnRn} holds for any function $r\in\HH(1/2,\infty)$ with compact support.
\end{corollary}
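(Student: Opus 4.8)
The plan is to apply the Borodin--Okounkov--Geronimo--Case formula~\eqref{eq:BOGC-formula} with symbol $F=R_n$ and then to pass to the limit $n\to\infty$ in each of its two factors. First I would check the hypotheses of that formula: $R_n\in H_{1/2}(\mathbb{T})$, and since $r$ has zero average $\widehat{R_n}^{\,\mathbb{T}}(0)=\frac{1}{2\pi n}\widehat{r}^{\,\mathbb{R}}(0)=0$, while $(R_n)_--(R_n)_+$ is bounded for all large $n$, being (as we shall see) a periodized rescaling of $r^--r^+$, which is bounded because the Hilbert transform of a smooth compactly supported function is continuous and vanishes at infinity. The formula then presents $D_n(\exp R_n)$ as the product of $\exp\bigl(\sum_{k>0}k\,\widehat{R_n}^{\,\mathbb{T}}(k)\,\widehat{R_n}^{\,\mathbb{T}}(-k)\bigr)$ and the Fredholm determinant $\det\bigl(1-\chi_{[n+1,\infty)}\Hb(N_n)\Hb(\widetilde{N_n^{-1}})\chi_{[n+1,\infty)}\bigr)$ with $N_n=\exp((R_n)_--(R_n)_+)$, and it remains to treat the two factors.

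For the exponential prefactor, the relation $\widehat{R_n}^{\,\mathbb{T}}(k)=\frac{1}{2\pi n}\widehat{r}^{\,\mathbb{R}}(k/n)$ rewrites $\sum_{k>0}k\,\widehat{R_n}^{\,\mathbb{T}}(k)\,\widehat{R_n}^{\,\mathbb{T}}(-k)$ as $\frac{1}{4\pi^2}\sum_{k>0}\frac{k}{n}\,\widehat{r}^{\,\mathbb{R}}(k/n)\,\widehat{r}^{\,\mathbb{R}}(-k/n)\cdot\frac1n$, a Riemann sum converging to $\frac{1}{4\pi^2}\int_0^{+\infty}u\,\widehat{r}^{\,\mathbb{R}}(u)\,\widehat{r}^{\,\mathbb{R}}(-u)\,du=\frac{1}{4\pi^2}\langle r^+,r^-\rangle_{\dH(\mathbb{R})}$; the convergence, together with control of the tails, is immediate since $\widehat{r}^{\,\mathbb{R}}$ is a Schwartz function.

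The essential part is the convergence of the Fredholm determinant. I would use the isometric embedding $U_n\colon l_2(\mathbb{N})\to L_2(0,+\infty)$ that sends a sequence $\varphi$ to the step function equal to $\sqrt{n}\,\varphi(k)$ on $\bigl((k-1)/n,\,k/n\bigr]$; under conjugation by $U_n$ the projection $\chi_{[n+1,\infty)}$ turns into multiplication by $\chi_{(1,+\infty)}$ and the Fredholm determinant is unchanged. The crux is the claim that the two rescaled Hankel blocks converge in Hilbert--Schmidt norm, namely $U_n\chi_{[n+1,\infty)}\Hb(N_n)U_n^*\to\chi_{(1,+\infty)}\Ha(h)$ and $U_n\Hb(\widetilde{N_n^{-1}})\chi_{[n+1,\infty)}U_n^*\to\Ha(\widetilde{h^{-1}})\chi_{(1,+\infty)}$. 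Granting this, the products converge in trace norm (if $A_n\to A$ and $B_n\to B$ in Hilbert--Schmidt norm, then $A_nB_n\to AB$ in trace norm), hence the Fredholm determinants converge; since $\{1\}$ is a null set the limit is $\det\bigl(1-\chi_{[1,+\infty)}\Ha(h)\Ha(\widetilde{h^{-1}})\chi_{[1,+\infty)}\bigr)$, and combined with the prefactor this yields~\eqref{eq:lim-DnRn}.

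It remains to establish the Hilbert--Schmidt convergence, which I expect to be the main obstacle. Put $g=r^--r^+$ and $G_n=(R_n)_--(R_n)_+$; then $\widehat{G_n}^{\,\mathbb{T}}(m)=\frac{1}{2\pi n}\widehat{g}^{\,\mathbb{R}}(m/n)$ for every $m$, so by Poisson summation $G_n(\theta)=\sum_{j\in\mathbb{Z}}g\bigl(n\theta+2\pi nj\bigr)$. Because $r$ has zero average, $g$ and all its derivatives decay at least like $|\lambda|^{-2}$, which gives $\|G_n-g(n\,\cdot\,)\|_{C^k(\mathbb{T})}=O(n^{-2})$ for each fixed $k$; passing through the exponential one obtains $\widehat{N_n}^{\,\mathbb{T}}(m)=\frac{1}{2\pi n}\widehat{h}^{\,\mathbb{R}}(m/n)+O(n^{-2})$ uniformly in $m\neq0$, together with an $n$-uniform rapid-decay estimate $|\widehat{N_n}^{\,\mathbb{T}}(m)|\le C_k\,n^{-1}\bigl(1+|m|/n\bigr)^{-k}$ coming from the fast decay of $\widehat{g}^{\,\mathbb{R}}=-\sgn(\cdot)\,\widehat{r}^{\,\mathbb{R}}$. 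Since the $(i,j)$ entry of $\chi_{[n+1,\infty)}\Hb(N_n)$ is $\widehat{N_n}^{\,\mathbb{T}}(i+j-1)$ while the kernel of $\chi_{(1,+\infty)}\Ha(h)$ at $(x,y)$ is $\frac{1}{2\pi}\widehat{h}^{\,\mathbb{R}}(x+y)$, the estimates above give pointwise convergence of the rescaled kernels, the decay bound supplies an $n$-uniform dominating tail, a Riemann-sum argument gives convergence of the Hilbert--Schmidt norms, and pointwise kernel convergence together with convergence of norms yields convergence in Hilbert--Schmidt norm; the same reasoning applies to $\Hb(\widetilde{N_n^{-1}})\chi_{[n+1,\infty)}$ since $N_n^{-1}=\exp(-G_n)$. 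This $n$-uniform tail control is precisely where the smoothness, the compact support, and the zero-average normalization of $r$ are genuinely used. Finally, the corollary follows by approximating $r\in\HH(1/2,\infty)$ of compact support by smooth compactly supported functions of the same average and using the continuity, uniform in $n$, of both sides of~\eqref{eq:lim-DnRn} in the norm of $\HH(1/2,\infty)$.
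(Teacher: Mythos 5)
Your closing sentence---approximate $r$ by smooth compactly supported functions and invoke continuity of both sides of~\eqref{eq:lim-DnRn} in $\HH(1/2,\infty)$, uniformly in $n$ on the left---is exactly the paper's entire proof of this corollary; everything before it is, in effect, an independent proof of the preceding Lemma, and by a genuinely different method. The paper proves the convergence of the Fredholm determinants by computing $\tr\bigwedge^{l}$ of the discrete Hankel operators as Riemann sums converging to the corresponding integrals for the continual operators, with the series over $l$ controlled uniformly in $n$ by the bound $\bigl|\tr\bigwedge^{l}\Hb(r)\bigr|\le(\|\widehat r^{\,\mathbb{T}}\|_{L_1})^l/l!$ --- a term-by-term dominated-convergence argument on the Fredholm expansion that never places the discrete and continual operators on the same Hilbert space. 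You instead transport everything to $L_2(0,+\infty)$ via the isometries $U_n$, prove Hilbert--Schmidt convergence of each rescaled Hankel factor (using Poisson summation and the $O(n^{-2})$ comparison of $(R_n)_--(R_n)_+$ with the rescaled $g=r^--r^+$), and conclude trace-norm convergence of the product. Your route yields a stronger operator-convergence statement and makes visible exactly where smoothness, compact support and zero average are used; the paper's route is lighter on quantitative Fourier estimates. Two details to watch in your version: since $h=\exp(g)\to1$ at infinity, $\widehat h^{\,\mathbb{R}}$ must be read as $\widehat{h-1}^{\,\mathbb{R}}$ (harmless, as the Hankel operators see only positive frequencies), and the $O(n^{-2})$ error in $\widehat{N_n}^{\,\mathbb{T}}(m)$ needs accompanying decay in $m$ --- which your uniform rapid-decay bound does supply --- for the Hilbert--Schmidt sums over $i+j\ge n+2$ to close. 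Neither is a gap, only a point where the write-up should be explicit.
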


The corollary immediately follows from the lemma since both sides of the equality \eqref{eq:lim-DnRn} are continuous in the space~$\HH(1/2,\infty)$.

Before passing to the proof of the lemma, let us make the following observation on the convergence of Fredholm determinants. 
Let $K$ be a trace-class operator acting on a separable Hilbert space. 
Denote $|K|=\sqrt{K^*K}$. 
For $l\ge 1$ let $\bigwedge^{\!l}K$ stand for the exterior power of $K$. 
The exterior powers are also of trace class and we have
\begin{equation}\label{eq:tr-exterior}
	\tr\bigwedge\nolimits^{\!l} K\le \frac{(\tr|K|)^l}{l!}.
\end{equation}
Further, let $K$ and $K_n$ be trace-class operators, which may act on different Hilbert spaces. In order to establish the convergence
\begin{equation*}
	\lim_{n\to\infty}\det(1+K_n)=\det(1+K)
\end{equation*}
it is enough to verify the convergence
\begin{equation*}
	\lim_{n\to\infty}\tr\bigwedge\nolimits^{\!l} K_n=\tr\bigwedge\nolimits^{\!l} K,
\end{equation*}
and the uniform estimate
\begin{equation}\label{eq:est-sum-traces}
	\sum_{l=1}^\infty \sup_{n\in\mathbb{N}}\bigl|\tr\bigwedge\nolimits^{\!l} K_n\bigr|<+\infty.
\end{equation}
Moreover, if $K_n$ are positive self-adjoint operators, one can see from the formula~\eqref{eq:tr-exterior} that the condition
\begin{equation*}
	\sup_{n\in\mathbb{N}} \tr K_n<+\infty
\end{equation*}
is sufficient for \eqref{eq:est-sum-traces} to hold.

Consider the discrete Hankel operator with a smooth symbol $r$. Below we will use the simple estimate
\begin{equation*}
	\bigl|\tr \bigwedge\nolimits^{\!l} \Hb(r)\bigr|\le\frac{(\|\widehat{r}^{\mathbb{T}}\|_{L_1})^l}{l!}.
\end{equation*}
For the continual Hankel operator with a smooth symbol $r$ we will employ the similar estimate
\begin{equation*}
	\bigl|\tr\bigwedge\nolimits^{\!l} \Ha(r)\bigr|\le\frac{((2\pi)^{-1}\cdot\|\widehat{r}^{\mathbb{R}}\|_{L_1})^l}{l!}.
\end{equation*}

\begin{proof}[Proof of the lemma.]
	Recall that the functions $R_n$ are given by the formula~\eqref{eq:Rn}.
	For their Sobolev seminorms we have the equality
	\begin{equation*}
		\langle R_n^+,R_n^-\rangle_{\dH(\mathbb{T})}=
		\sum_{k>0} k \widehat{R_n}^{\mathbb{T}}(k)\widehat{R_n}^{\mathbb{T}}(-k)=
		\frac{1}{4\pi^2}\sum_{k>0}\frac{k}{n^2}\widehat{r}^{\,\mathbb{R}}\biggl(\frac{k}{n}\biggr)\widehat{r}^{\,\mathbb{R}}\biggl(-\frac{k}{n}\biggr),
	\end{equation*}
	whence the limit is
	\begin{equation*}
		\lim_{n\to\infty}\langle R_n^+,R_n^-\rangle_{\dH(\mathbb{T})}=
		\frac{1}{4\pi^2}\langle r^+,r^-\rangle_{\dH(\mathbb{R})}.
	\end{equation*}
	Similarly, for exterior powers of the operators $\Hb(R_n)$ we have
	\begin{multline*}
		\tr\bigwedge\nolimits^{\!l} \Hb(R_n)={}\\
		{}=		\frac{1}{l!}\sum_{i_1,\dots,i_l\in\mathbb{N}}
		\widehat{R_n}^{\mathbb{T}}(i_1+i_2-1)\cdots 
		\widehat{R_n}^{\mathbb{T}}(i_{l-1}+i_l-1)\widehat{R_n}^{\mathbb{T}}(i_{l}+i_1-1)={}\\
		{}=
		\frac{1}{l!(2\pi n)^l}\sum_{i_1,\dots,i_l\in\mathbb{N}}
		\widehat{r}^{\,\mathbb{R}}\biggl(\frac{i_1+i_2-1}{n}\biggr)\cdots
		\widehat{r}^{\,\mathbb{R}}\biggl(\frac{i_{l-1}+i_l-1}{n}\biggr)
		\widehat{r}^{\,\mathbb{R}}\biggl(\frac{i_l+i_1-1}{n}\biggr).
	\end{multline*}
	The smoothness of the function $r$ immediately yields that  $\sup_{n\in\mathbb{N}}\|\widehat{R_n}\|_{L_1}<+\infty$.
	From the identity
	\begin{multline*}
		\tr\bigwedge\nolimits^{\!l} \Ha(r)={}\\
		{}=
		\frac{1}{l!(2\pi)^l}
		\int_0^{+\infty}\!\!\cdots\int_0^{+\infty}
		\widehat{r}^{\,\mathbb{R}}(s_1+s_2)\cdots
		\widehat{r}^{\,\mathbb{R}}(s_{l-1}+s_l)
		\widehat{r}^{\,\mathbb{R}}(s_l+s_1)\,ds_1\dots ds_l,
	\end{multline*}
	we see that
	\begin{equation*}
		\lim_{n\to\infty} \tr\bigwedge\nolimits^{\!l} \Hb(R_n)=\tr\bigwedge\nolimits^{\!l} \Ha(r),
	\end{equation*}
	which implies that
	\begin{align*}
		\lim_{n\to\infty} \det(1+\Hb(R_n))&{}=\det(1+\Ha(r)),\\
		\lim_{n\to\infty} \det(1-\Hb(R_n))&{}=\det(1-\Ha(r)).	
	\end{align*}
	Assume now that we have two smooth functions $r^{(1)}$, $r^{(2)}$ with support lying in the interval $(-\pi,\pi)$. As above, we denote $R_n^{(1)}(\theta)=r^{(1)}(n\theta)$, $R_n^{(2)}(\theta)=r^{(2)}(n\theta)$. The proof of the limit relation
	\begin{equation*}
		\lim_{n\to\infty} \det\bigl(1+\Hb(R^{(1)}_n)\Hb(R^{(2)}_n)\bigr)=\det\bigl(1+\Ha(r^{(1)})\Ha(r^{(2)})\bigr).
	\end{equation*}
	and the similar formula for the restricted Hankel operators
	\begin{multline*}
		\lim_{n\to\infty} \det\bigl(1-\chi_{[n+1,+\infty)}\Hb(R^{(1)}_n)\Hb(R^{(2)}_n)\chi_{[n+1,+\infty)}\bigr)={}\\
		{}=
		\det\bigl(1-\chi_{[1,+\infty)}\Ha(r^{(1)})\Ha(r^{(2)})\chi_{[1,+\infty)}\bigr).
	\end{multline*}
	is parallel to the considerations above. The Borodin---Okounkov---Gero\-nimo---Case formula~\eqref{eq:BOGC-formula} concludes the proof of the lemma.
\end{proof}

\section{Scaling of Fredholm determinants}

It remains to express the Fredholm determinant in the left-hand side of the formula~\eqref{eq:thm-lim-BOGC} as the scaling limit of Toeplitz determinants.

Let $f$ be a smooth function with compact support on $\mathbb{R}$. As above, we let $r(\,\cdot\,)=f(\,\cdot\,/2\pi)$ and $R_n(\theta)=r(n\theta)$.

\begin{proposition}\label{prop:lim-DnRn}
	We have
	\begin{equation*}
		\lim_{n\to\infty} D_n(\exp(R_n))=\det\bigl(1+(e^f-1)\SINE\bigr).
	\end{equation*}
\end{proposition}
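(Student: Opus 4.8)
The plan is to recast the Toeplitz determinant $D_n(\exp(R_n))$ as a Fredholm determinant by the Andreief identity~\eqref{eq:Andreev-DnG} and then to pass to the scaling limit by means of the convergence criterion for Fredholm determinants of Section~3, in the same spirit as the proof of the preceding Lemma. The Andreief identity gives $D_n(\exp(R_n))=\det\bigl(1+(e^{R_n}-1)K_n\bigr)$, the operator acting on $L_2(\mathbb{T},d\theta/(2\pi))$ with kernel $(e^{R_n(\theta)}-1)K_n(\theta,\theta')$ and being of finite rank, hence trace-class. The natural candidate for the scaling limit is $(e^f-1)\SINE$, the operator that enters the definition~\eqref{eq:def-sin-proc} of the sine-process (note that $e^f-1$ is bounded with compact support, since $f$ is). Thus it suffices to verify the two hypotheses of the convergence criterion of Section~3 for the operators $(e^{R_n}-1)K_n$ and $(e^f-1)\SINE$.

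Since the naive operator $(e^{R_n}-1)K_n$ has trace norm of order $\sqrt{n}$, before estimating exterior powers I would symmetrize it. Writing $M_\phi$ for the operator of multiplication by $\phi$, factor $e^{R_n}-1=G_nH_n$ with $G_n=|e^{R_n}-1|^{1/2}$, $H_n=(e^{R_n}-1)|e^{R_n}-1|^{-1/2}$ (both supported in $(-\pi,\pi)$ for $n$ large); by the cyclic invariance of the Fredholm determinant, $D_n(\exp(R_n))=\det\bigl(1+M_{H_n}K_nM_{G_n}\bigr)$. As $K_n$ is an orthogonal projection, $M_{H_n}K_nM_{G_n}=(M_{H_n}K_n)(K_nM_{G_n})$ is a product of two Hilbert--Schmidt operators with
\[
\|M_{H_n}K_n\|_2^2=\|K_nM_{G_n}\|_2^2=\int_{\mathbb{T}}|e^{R_n}-1|\,K_n(\theta,\theta)\,\frac{d\theta}{2\pi}=(n+1)\int_{\mathbb{T}}|e^{R_n}-1|\,\frac{d\theta}{2\pi}.
\]
The change of variables $\theta=2\pi x/n$, under which $R_n(2\pi x/n)=f(x)$ and $d\theta/(2\pi)=dx/n$, turns the last integral into $\tfrac1n\|e^f-1\|_{L_1(\mathbb{R})}$, so $\|M_{H_n}K_nM_{G_n}\|_1\le\tfrac{n+1}{n}\|e^f-1\|_{L_1}\le2\|e^f-1\|_{L_1}$, uniformly in $n$; this gives the uniform summable bound $\sum_l\sup_n\bigl|\tr\bigwedge\nolimits^{\!l}(M_{H_n}K_nM_{G_n})\bigr|\le\sum_l(2\|e^f-1\|_{L_1})^l/l!<\infty$ required by the criterion.

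For the second hypothesis I would compute the scaling limit of the kernels. The same change of variables conjugates $M_{H_n}K_nM_{G_n}$ to the operator on $L_2(\mathbb{R})$ with kernel $h(x)\widetilde{K}_n(x,y)g(y)\,\chi_{(-n/2,n/2)}(x)\chi_{(-n/2,n/2)}(y)$, where $g=|e^f-1|^{1/2}$, $h=(e^f-1)g^{-1}$, and
\[
\widetilde{K}_n(x,y)=\frac1n K_n\Bigl(\frac{2\pi x}{n},\frac{2\pi y}{n}\Bigr)=\frac1n\cdot\frac{\sin\bigl(\pi(x-y)\tfrac{n+1}{n}\bigr)}{\sin\bigl(\pi(x-y)/n\bigr)}
\]
is the rescaled Dirichlet kernel (the cutoffs are harmless, since $e^f-1$ is supported in a fixed compact set lying inside $(-n/2,n/2)$ for $n$ large). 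One checks at once that $\widetilde{K}_n(x,y)\to\SINE(x,y)$ for every fixed $(x,y)$, uniformly on compact sets, and that $|\widetilde{K}_n(x,y)|\le\tfrac{n+1}{n}\le2$ everywhere. Writing $\tr\bigwedge\nolimits^{\!l}$ of each operator as $\tfrac1{l!}$ times the integral of the determinant of the corresponding $l\times l$ kernel matrix — an integral effectively over $(\supp f)^l$ because of the factors $g(x_j)$ — the integrand converges pointwise, while Hadamard's inequality together with $|\widetilde{K}_n|\le2$ furnishes a domination uniform in $n$; dominated convergence then yields $\tr\bigwedge\nolimits^{\!l}(M_{H_n}K_nM_{G_n})\to\tr\bigwedge\nolimits^{\!l}(M_h\SINE M_g)$. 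Both hypotheses being verified, the criterion gives $D_n(\exp(R_n))\to\det(1+M_h\SINE M_g)=\det\bigl(1+(e^f-1)\SINE\bigr)$, where the last equality is again cyclic invariance; this is the assertion.

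The only genuinely non-routine point is the trace-class bookkeeping: $(e^{R_n}-1)K_n$ is trace-class but \emph{not} with uniformly bounded trace norm, and it lives a priori on $L_2(\mathbb{T})$ rather than on $L_2(\mathbb{R})$. Both issues are absorbed by the factorization $e^{R_n}-1=G_nH_n$: it exploits that the projection $K_n$ has rank $n$ while $\int_{\mathbb{T}}|e^{R_n}-1|\,d\theta=O(1/n)$, yielding the uniform Hilbert--Schmidt bounds above, and it exhibits the limiting operator $M_h\SINE M_g$ as a product of two Hilbert--Schmidt operators, hence trace-class (their square norms are $\int_{\mathbb{R}}|h|^2\,dx$ and $\int_{\mathbb{R}}|g|^2\,dx$, both finite since $\SINE(x,x)\equiv1$). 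The rest is the routine verification of the pointwise convergence of the Dirichlet kernel and an application of dominated convergence.
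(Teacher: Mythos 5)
Your proof is correct, and its skeleton is the same as the paper's: the Andreief identity~\eqref{eq:Andreev-DnG} turns $D_n(\exp(R_n))$ into $\det\bigl(1+(e^{R_n}-1)K_n\bigr)$, and the rescaled Dirichlet kernel converges to the sine kernel. You differ only in how the convergence of the Fredholm determinants themselves is justified. The paper delegates this step to its $F$-convergence criterion (uniform convergence of the kernels on compact sets together with total-variation convergence of the diagonal measures $K_n(x,x)\,d\mu_n(x)$, stated for kernels inducing positive contractions) and to Corollary~\ref{cor:lim-det}; there the bound $\det K_n(x_i,x_j)\le\prod_i K_n(x_i,x_i)$ for positive kernels plays the role that Hadamard's inequality plays for you. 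You instead symmetrize $e^{R_n}-1=G_nH_n$, extract an $n$-uniform trace-norm bound from the Hilbert--Schmidt factorization through the rank-$(n+1)$ projection $K_n$, and conclude by dominated convergence. Your route is more self-contained and has a genuine advantage: the operator $(e^{R_n}-1)K_n$ is not itself a positive contraction, so the paper's criterion applies only after precisely the kind of conjugation you carry out explicitly; your bookkeeping makes that reduction, which the paper leaves implicit, completely rigorous. Both arguments ultimately deliver the same two ingredients --- termwise convergence of the Fredholm expansion and a summable bound on the traces of the exterior powers that is uniform in $n$.
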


Let $E$ be a complete separable metric space. Let $\mu_n$ be a sequence of $\sigma$-finite Radon measures on $E$ and $K_n$ be a sequence of continuous kernels on $E$. We assume that the kernels induce positive contractions on $L_2(E, \mu_n)$, which by a slight abuse of notation we also denote by $K_n$. Further, let $\mu$ be a $\sigma$-finite Radon measure on $E$ and $K$ be a continuous kernel, again inducing a positive contraction on $L_2(E, \mu_n)$.
We say that the sequence $K_n$ $F$-converges to the limit kernel $K$ if the following conditions hold:
	\begin{enumerate}
		\item $K_n\to K$ uniformly on compact subsets of $E\times E$;
		\item measures $K_n(x,x)\,d\mu_n(x)$ converge to $K(x,x)\,d\mu(x)$ in total variation. 
	\end{enumerate}
	The definition immediately implies
	\begin{proposition}
		Assume that a sequence of continuous kernels $K_n$ inducing nonnegative contractions $F$-converges to the limit kernel $K$. Then for any compact set $C\subset E$ we have that
		\begin{equation*}
			\lim_{n\to\infty}\det(1-\chi_CK_n\chi_C)=\det(1-\chi_CK\chi_C).
		\end{equation*}
	\end{proposition}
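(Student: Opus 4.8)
The plan is to expand both Fredholm determinants in exterior powers and then pass to the limit coefficient by coefficient; the point of working with the coefficients is that each is a multiple integral, and although the base measures $\mu_n$ need not converge in any useful sense, the weighted diagonal measures do. As a first step I would record the standard expansion: since $\chi_CK_n\chi_C$ is a nonnegative trace-class operator on $L_2(C,\mu_n)$ --- trace-class because $\tr(\chi_CK_n\chi_C)=\int_CK_n(x,x)\,d\mu_n(x)<+\infty$, the integral being finite as $K_n$ is continuous, $C$ is compact and $\mu_n$ is Radon --- we have
\[
\det(1-\chi_CK_n\chi_C)=\sum_{l\ge 0}\frac{(-1)^l}{l!}\int_{C^l}\det\bigl(K_n(x_i,x_j)\bigr)_{i,j=1}^{l}\prod_{i=1}^{l}d\mu_n(x_i),
\]
and the analogous expansion for $\det(1-\chi_CK\chi_C)$ with $K$ and $\mu$. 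By \eqref{eq:tr-exterior} the absolute value of the $l$-th summand does not exceed $\frac{1}{l!}\bigl(\tr(\chi_CK_n\chi_C)\bigr)^l$, and since $\nu_n:=K_n(x,x)\,d\mu_n(x)$ converges in total variation to $\nu:=K(x,x)\,d\mu(x)$ we get $\nu_n(C)\to\nu(C)$, hence $M:=\sup_n\int_CK_n(x,x)\,d\mu_n(x)<+\infty$. Therefore both series are dominated, uniformly in $n$, by the convergent series $\sum_{l\ge0}M^l/l!$, and it suffices to prove, for every fixed $l$, the convergence
\[
\lim_{n\to\infty}\int_{C^l}\det\bigl(K_n(x_i,x_j)\bigr)_{i,j=1}^{l}\prod_{i=1}^{l}d\mu_n(x_i)=\int_{C^l}\det\bigl(K(x_i,x_j)\bigr)_{i,j=1}^{l}\prod_{i=1}^{l}d\mu(x_i).
\]

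The essential difficulty is that $F$-convergence gives no hold on the measures $\mu_n$ themselves, only on the diagonal-weighted measures $\nu_n$, so I would recast each multiple integral as an integral against $\nu_n^{\otimes l}$. A nonnegative integral operator with a continuous kernel has a pointwise positive semidefinite kernel --- one checks this by approximating unit point masses at $x_1,\dots,x_l\in\supp\mu_n$ by $L_2$-normalized indicators of small balls, using the separability of $E$ and the continuity of the kernel, and one notes that $\nu_n$, being absolutely continuous with respect to $\mu_n$, is carried by $\supp\mu_n$. In particular $|K_n(x,y)|^2\le K_n(x,x)K_n(y,y)$; setting $g_n(x,y):=K_n(x,y)/\sqrt{K_n(x,x)K_n(y,y)}$ on the set where all diagonal entries are positive, one has $\det\bigl(K_n(x_i,x_j)\bigr)_{i,j}=\bigl(\prod_iK_n(x_i,x_i)\bigr)\det\bigl(g_n(x_i,x_j)\bigr)_{i,j}$ there, while $\det\bigl(K_n(x_i,x_j)\bigr)_{i,j}$ and $\nu_n^{\otimes l}$ both vanish where some diagonal entry is zero (a zero diagonal entry of a positive semidefinite matrix annihilates its row and column). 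Hence the $l$-th integral equals $\int_{C^l}\det\bigl(g_n(x_i,x_j)\bigr)_{i,j}\,d\nu_n^{\otimes l}$; and since the matrix $\bigl(g_n(x_i,x_j)\bigr)_{i,j}$ is positive semidefinite with unit diagonal, Hadamard's inequality gives the bound $0\le\det\bigl(g_n(x_i,x_j)\bigr)_{i,j}\le 1$, uniform in $n$ and in the points. The same considerations apply to $K$, $\mu$, $\nu$ and the limiting function $g$.

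It remains to take the limit. The restrictions of $\nu_n$ and $\nu$ to $C$ are finite measures with $\nu_n(C)\to\nu(C)$, so a telescoping estimate upgrades $\|\nu_n-\nu\|_{\mathrm{TV}}\to 0$ to $\|\nu_n^{\otimes l}-\nu^{\otimes l}\|_{\mathrm{TV}}\to 0$. On the $\nu^{\otimes l}$-conull subset of $C^l$ where $K(x_i,x_i)>0$ for all $i$, the uniform convergence $K_n\to K$ on $C\times C$ forces $g_n(x_i,x_j)\to g(x_i,x_j)$, hence $\det\bigl(g_n(x_i,x_j)\bigr)_{i,j}\to\det\bigl(g(x_i,x_j)\bigr)_{i,j}$ $\nu^{\otimes l}$-almost everywhere. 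Writing
\begin{multline*}
\int_{C^l}\det(g_n)\,d\nu_n^{\otimes l}-\int_{C^l}\det(g)\,d\nu^{\otimes l}={}\\
{}=\int_{C^l}\det(g_n)\,\bigl(d\nu_n^{\otimes l}-d\nu^{\otimes l}\bigr)+\int_{C^l}\bigl(\det(g_n)-\det(g)\bigr)\,d\nu^{\otimes l},
\end{multline*}
the first term on the right is at most $\|\nu_n^{\otimes l}-\nu^{\otimes l}\|_{\mathrm{TV}}\to 0$ in absolute value (using $|\det(g_n)|\le 1$), and the second tends to $0$ by dominated convergence; this gives the desired termwise convergence, and combined with the uniform domination from the first step it yields $\lim_n\det(1-\chi_CK_n\chi_C)=\det(1-\chi_CK\chi_C)$. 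The step I expect to be the real obstacle is the middle one: one must pass from $\mu_n^{\otimes l}$ to $\nu_n^{\otimes l}$ and exploit that $|\det(g_n(x_i,x_j))|\le 1$ uniformly --- a bound that holds only after normalizing the diagonal out of $\det(K_n(x_i,x_j))$ --- together with the auxiliary fact that a nonnegative integral operator with continuous kernel has a pointwise positive semidefinite kernel.
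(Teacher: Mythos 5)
Your proposal is correct and follows essentially the same route as the paper: reduce to termwise convergence of the Fredholm expansion (with the uniform domination coming from $\sup_n\tr(\chi_CK_n\chi_C)<\infty$), then normalize the determinant by the diagonal entries so that it is bounded by $1$ for positive kernels, and conclude from the total-variation convergence of the measures $K_n(x,x)\,d\mu_n(x)$. The only cosmetic differences are that you handle the set where $K(x,x)=0$ by observing that both the determinant and the weighted measure vanish there, whereas the paper truncates to $C_\varepsilon=C\cap\{K(x,x)\ge\varepsilon\}$ and lets $\varepsilon\to0$, and that you spell out the pointwise positive semidefiniteness of the kernel, which the paper takes for granted.
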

	\begin{proof}
		1. It is sufficient to verify the following equality for every $l$:
		\begin{multline*}
			\lim_{n\to\infty} \idotsint_{C^n}\det K_n(x_i,x_j)_{i,j=1,\dots,l}\prod_{i=1}^l d\mu_n(x_i)={}\\
			{}=	\idotsint_{C^n}\det K(x_i,x_j)_{i,j=1,\dots,l}\prod_{i=1}^l d\mu(x_i)
		\end{multline*}
		and then use the uniform convergence of the series in the definition of Fredholm determinants.
		
		2. Further, let $\varepsilon>0$ and consider a compact subset $C_\varepsilon=C\cap\{x\in E:K(x,x)\ge\varepsilon\}$. For $x_1,\dots,x_n\in C_\varepsilon$ we have the uniform convergence
		\begin{equation}\label{eq:det-uniform}
			\frac{\det K_n(x_i,x_j)_{i,j=1,\dots,l}}{\prod_{i=1}^l K_n(x_i,x_i)}\rightrightarrows
			\frac{\det K(x_i,x_j)_{i,j=1,\dots,l}}{\prod_{i=1}^l K(x_i,x_i)}.
		\end{equation} 
		Note that the fractions in~\eqref{eq:det-uniform} are bounded by~$1$ from above since $K_n$ are positive kernels. Using this fact, the convergence in total variation of the measures $K_n(x,x)\,d\mu_n(x)$ to the measure $K(x,x)\,d\mu(x)$ and passing to the limit as $\varepsilon\to 0$, we complete the proof.
	\end{proof}

Consider an interval $[a,b]\subset\mathbb{R}$. Proposition~\ref{prop:lim-DnRn} immediately gives
\begin{corollary}\label{cor:lim-det}
	Let $K$ be a trace-class operator on $L_2([a, b])$ with a continuous kernel $K(x, y)$. Let also $K_n$ be a sequence of operators on $\mathbb{Z}$ with a standard counting measure. Assume that for any $x,y\in[a, b]$ the following limit relation holds
	\begin{equation}\label{eq:lim-kernels}
		\lim_{n\to\infty}nK_n([nx],[ny])=K(x,y),
	\end{equation}
	where the convergence is uniform on $[a, b]$. Then we have
	\begin{equation}\label{eq:lim-det}
		\lim_{n\to\infty}\det(1+\chi_{[a,b]}K_n\chi_{[a,b]})=\det(1+K).
	\end{equation}
\end{corollary}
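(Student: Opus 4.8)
The plan is to apply the criterion for convergence of Fredholm determinants formulated above (for trace‑class operators possibly acting on different Hilbert spaces): writing $K_n^C:=\chi_{[a,b]}K_n\chi_{[a,b]}$, it suffices to establish that $\tr\bigwedge\nolimits^{\!l}K_n^C\to\tr\bigwedge\nolimits^{\!l}K$ for every $l\ge1$ together with the uniform bound $\sum_{l\ge1}\sup_n\bigl|\tr\bigwedge\nolimits^{\!l}K_n^C\bigr|<\infty$ (cf.~\eqref{eq:est-sum-traces}). In agreement with the scaling in~\eqref{eq:lim-kernels}, the projection $\chi_{[a,b]}$ is understood to act on $\ell_2(\mathbb Z)$ by restriction to the finite set $\Lambda_n:=\{j\in\mathbb Z:\,j/n\in[a,b]\}$; thus $K_n^C$ is of finite rank, hence trace class, and
\begin{multline*}
\tr\bigwedge\nolimits^{\!l}K_n^C=\frac1{l!}\sum_{j_1,\dots,j_l\in\Lambda_n}\det\bigl(K_n(j_p,j_q)\bigr)_{p,q=1}^l={}\\
{}=\frac1{l!\,n^l}\sum_{j_1,\dots,j_l\in\Lambda_n}\det\bigl(nK_n(j_p,j_q)\bigr)_{p,q=1}^l .
\end{multline*}

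To prove the termwise convergence I would argue in two steps, with the help of the quantities $M:=\sup_n\bigl\|\,nK_n([n\,\cdot\,],[n\,\cdot\,])\,\bigr\|_{L_\infty([a,b]^2)}$, which is finite because a sequence converging uniformly to a bounded function is uniformly bounded (and $K$, continuous on the compact $[a,b]^2$, is bounded), and $\varepsilon_n:=\sup_{j,j'\in\Lambda_n}\bigl|nK_n(j,j')-K(j/n,j'/n)\bigr|$, which tends to $0$ by~\eqref{eq:lim-kernels}. First, replacing each entry $nK_n(j_p,j_q)$ by $K(j_p/n,j_q/n)$ in the displayed identity alters $\tr\bigwedge\nolimits^{\!l}K_n^C$ by a quantity bounded by $l\,(b-a+1)^l M^{l-1}\varepsilon_n\to0$: since both matrices have entries of modulus at most $M$, multilinearity of the determinant bounds the change in each $l\times l$ minor by $l\cdot l!\,M^{l-1}\varepsilon_n$, there are $|\Lambda_n|^l\le(n(b-a)+1)^l$ such minors, and the prefactor is $(l!\,n^l)^{-1}$. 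Second, the resulting expression $\frac1{l!\,n^l}\sum_{j_1,\dots,j_l\in\Lambda_n}\det\bigl(K(j_p/n,j_q/n)\bigr)_{p,q=1}^l$ is a Riemann sum, over the $\tfrac1n$-grid $\{j/n:j\in\Lambda_n\}$ of $[a,b]$ (whose attached cells cover $[a,b]^l$ up to a set of measure $O(1/n)$), for the continuous function $(x_1,\dots,x_l)\mapsto\frac1{l!}\det\bigl(K(x_p,x_q)\bigr)_{p,q=1}^l$ on $[a,b]^l$; it therefore converges to $\frac1{l!}\int_{[a,b]^l}\det\bigl(K(x_p,x_q)\bigr)_{p,q=1}^l\,dx_1\cdots dx_l=\tr\bigwedge\nolimits^{\!l}K$.

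For the uniform‑in‑$n$ bound I would use Hadamard's inequality together with $|nK_n(j_p,j_q)|\le M$: it gives $\bigl|\det(nK_n(j_p,j_q))_{p,q=1}^l\bigr|\le l^{l/2}M^l$, whence
\[
\bigl|\tr\bigwedge\nolimits^{\!l}K_n^C\bigr|\le\frac{|\Lambda_n|^l}{l!\,n^l}\,l^{l/2}M^l\le\frac{l^{l/2}\bigl(M(b-a+1)\bigr)^l}{l!}\qquad\text{for every }n,
\]
and $\sum_{l\ge1}l^{l/2}C^l/l!<\infty$ for every $C>0$ by Stirling's formula (using $l!\ge(l/e)^l$, the general term is at most $(eC)^l/l^{l/2}$). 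Both conditions of the criterion being verified, we conclude $\det(1+K_n^C)\to\det(1+K)$, which is~\eqref{eq:lim-det}.

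The argument is essentially bookkeeping; the one point that must not be overlooked is the uniform‑in‑$n$ Hadamard bound of the last step, which rests on the finiteness of $M$: this is not an additional hypothesis, but a consequence of the uniform convergence of the rescaled kernels $nK_n([n\,\cdot\,],[n\,\cdot\,])$ to the bounded continuous kernel $K$. The only other mild subtlety, the mismatch between the discrete window $\Lambda_n$ and the interval $[a,b]$, affects only a set of measure $O(1/n)$ and causes no difficulty.
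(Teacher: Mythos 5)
Your proof is correct, but it follows a genuinely different route from the paper's. The paper obtains this corollary as an immediate consequence of its $F$-convergence proposition: there the termwise convergence of the Fredholm expansion is combined with a domination argument that relies on the kernels being \emph{positive} contractions, via the normalized minors $\det K_n(x_i,x_j)/\prod_i K_n(x_i,x_i)\le 1$ and the total-variation convergence of the diagonal measures $K_n(x,x)\,d\mu_n(x)$. You instead give a direct, self-contained proof: expand $\tr\bigwedge\nolimits^{\!l}\chi_{[a,b]}K_n\chi_{[a,b]}$ as a sum of principal minors, get termwise convergence by an entrywise replacement estimate plus a Riemann-sum argument, and supply the uniform-in-$n$ summable bound \eqref{eq:est-sum-traces} from Hadamard's inequality, using only the uniform boundedness $M<\infty$ of the rescaled kernels. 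Your version has the advantage of not assuming positivity at all, which actually matches the hypotheses of the corollary as stated (positivity appears in the paper's $F$-convergence proposition but not in the corollary); the price is that the Hadamard-plus-volume bound is tied to the compact interval with counting/Lebesgue measure, whereas the paper's machinery is designed to handle general measures $\mu_n$ and the positivity structure that is actually present in the intended application $K_n=$ Dirichlet kernel, $K=\SINE$. The only step you take on faith is the classical identity $\tr\bigwedge\nolimits^{\!l}K=\frac{1}{l!}\int_{[a,b]^l}\det(K(x_p,x_q))\,dx_1\cdots dx_l$ for a trace-class operator with continuous kernel on a compact interval; this is standard and is used in the same form elsewhere in the paper (e.g.\ for $\tr\bigwedge\nolimits^{\!l}\Ha(r)$), so it is not a gap.
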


Now Proposition~\ref{prop:lim-DnRn} follows from the formula~\eqref{eq:Andreev-DnG} and the formula~\eqref{eq:lim-det} in Corollary~\ref{cor:lim-det}.


\begin{thebibliography}{99}
	\bibitem{Andreief}C. Andr\'eief. Note sur une relation les int\'egrales d\'efinies des produits des fonctions.
	\textit{M\'em. de la Soc. Sci. Bordeaux}, \textbf{2} (1883), 1--14.
	
	\bibitem{BasorChen}E. Basor, Y. Chen. A Note on Wiener-Hopf Determinants and the Borodin-Okounkov Identity.  	arXiv:math/0202062, 9~pp.
	
	\bibitem{BasorWidom}Basor, E.L., Widom, H. On a Toeplitz determinant identity of Borodin and Okounkov. \textit{Integr. Equ. Oper. Theory} \textbf{37} (2000), 397--401. 
	
	\bibitem{BorodinOkounkov}Borodin, A., Okounkov, A. A Fredholm determinant formula for Toeplitz determinants. \textit{Integr. Equ. Oper. Theory} \textbf{37} (2000), 386--396. 
	
	\bibitem{Boettcher1}A. Boettcher, On the determinant formulas by Borodin, Okounkov, Baik, Deift, and Rains. arXiv:math/0101008, 10~pp.
	
	\bibitem{Boettcher2}A. Boettcher, One more proof of the Borodin-Okounkov formula for Toeplitz determinants, Integral Equations Operator Theory \textbf{41} (2001), 123--125.

	\bibitem{Bufetov-Quasi}Alexander I. Bufetov,
	Quasi-symmetries of determinantal point processes.
	\textit{Annals of Probability}, \textbf{46}:2 (2018), 956--1003.
	
	\bibitem{Bufetov}Alexander I. Bufetov, The sine-process has excess one, arXiv:1912.13454, 57 pp.
	
	\bibitem{DeiftItsKras}Deift, P., Its, A. and Krasovsky, I., Toeplitz Matrices and Toeplitz Determinants under the Impetus of the Ising Model: Some History and Some Recent Results. \textit{Comm. Pure Appl. Math.}, \textbf{66}:9 (2013), 1360--1438.
	
	\bibitem{GeronimoCase}J.F. Geronimo and K.M. Case, Scattering theory and polynomials orthogonal on the unit circle. \textit{J. Math. Phys.} \textbf{20} (1979), 299--310.
	
	\bibitem{GolIbr}Б. Л. Голинский, И. А. Ибрагимов. О предельной теореме Г. Сегё, \textit{Изв. АН СССР. Сер. матем.}, \textbf{35}:2 (1971), 408--427.
	
	\bibitem{GrenSzego}Гренандер У., Сеге Г. Теплицевы формы и их приложения.
	Пер. с англ., М.: Иностранная литература, 1961. 307 с.
	
	\bibitem{Ibragimov}И. А. Ибрагимов. Об одной теореме Г. Сегё, \textit{Матем. заметки}, \textbf{3}:6 (1968),  693--702.
	
	\bibitem{Simon}B. Simon, Orthogonal Polynomials on the Unit Circle. Amer. Math. Soc., 2005, 1044 pp.
	
	\bibitem{Szego1}Szeg\"o, G. Ein Grenzwertsatz \"uber die Toeplitzschen Determinanten einer reellen positiven Funktion. \textit{Math. Ann.} \textbf{76} (1915), 490--503.
	
	\bibitem{Szego2}G. Szeg\H o, On certain hermitian forms associated with the Fourier series of a positive function,
	Festschrift Marcel Riesz, Lund 1952, 228--238.
\end{thebibliography}
\end{document}